\documentclass[12pt]{amsart}

\usepackage{amsfonts,amssymb,amsmath,textcomp,amsthm}
\usepackage{color}
\usepackage{hyperref}
\addtolength{\textheight}{3cm} \addtolength{\textwidth}{3cm}
\addtolength{\hoffset}{-15mm} \addtolength{\voffset}{-15mm}

\usepackage{amstext} 
\usepackage{array}   
\newcolumntype{C}{>{$}c<{$}} 

\newtheorem{theorem}{Theorem}[section]
\theoremstyle{plain}

\newtheorem{corollary}[theorem]{Corollary}

\newtheorem{definition}[theorem]{Definition}
\newtheorem{example}[theorem]{Example}

\newtheorem{lemma}[theorem]{Lemma}

\newtheorem{proposition}[theorem]{Proposition}
\newtheorem{remark}[theorem]{Remark}

\numberwithin{equation}{section}



\begin{document}

\title{Half-automorphism group of a class of Bol loops}

\author[de Barros]{Dylene Agda Souza de Barros}
\author[dos Anjos]{Giliard Souza dos Anjos}
\address[de Barros]{Faculdade de Matem\'atica, Universidade Federal de Uberl\^{a}ndia, Av. Jo\~{a}o Naves de \'Avila, 2121, Campus Santa M\^onica, Uberl\^{a}ndia, MG, Brazil, CEP 38408-100}
\email[de Barros]{dylene@ufu.br}
\address[dos Anjos]{Departamento de Matem\'atica, Universidade Estadual Paulista (Unesp), Instituto de Bioci\^encias, Letras e Ci\^encias Exatas, Rua Crist\'ov\~ao Colombo, 2265, 15054-000, S\~ao Jos\'e do Rio Preto - SP, Brazil}
\email[dos Anjos]{giliard.anjos@unesp.br}

\begin{abstract}
A \emph{Bol loop} is a loop that satisfies the Bol identity $(xy.z)y=x(yz.y)$. If $L$ is a loop and  $f:L\to L$ is a bijection such that  $f(xy)\in\{f(x)f(y),f(y)f(x)\}$, for every $x$, $y\in L$, then $f$ is called a \emph{half-automorphism} of $L$. In this paper, we describe the half-automorphism group of a class of Bol loops of order $4m$.
\end{abstract}

\keywords{Bol loops, Half-automorphisms, Half-automorphism group}
\subjclass[2010]{Primary: 20N05. Secondary: 20B25.}

\maketitle

\section{Introduction}

A \emph{loop} is a set $L$ with a binary operation $\cdot$ and a neutral element $1\in L$ such that for every $a$, $b\in L$ the equations $ax=b$ and $ya=b$ have unique solutions $x$, $y\in L$, respectively.

Let $(L,\ast)$ and $(L',\cdot)$ be loops. A bijection $f:L\to L'$ is a \emph{half-isomorphism} if $f(x\ast y)\in\{f(x)\cdot f(y),f(y)\cdot f(x)\}$, for every $x$, $y\in L$. A \emph{half-automorphism} is defined as expected. We say that a half-isomorphism (half-automorphism) is \emph{proper} if it is neither an isomorphism (automorphism) nor an anti-isomorphism (anti-automorphism). 

In the year 1957, W.R. Scott proved that there is no proper half-homomorphism between two groups. He also gave an example of a loop of order 8 that has a proper half-automorphism, so Scott's result can not be generalized to all loops. In the last decade, many facts about half-isomorphisms between nonassociative loops have been proved. Gagola and Giuliani extended Scott's result to Moufang loops of odd order \cite{GG}. Grishkov \emph{et al} showed that there is no proper half-automorphism of a finite automorphic Moufang loop \cite{GGRS}. Kinyon, Stuhl and Vojt\v{e}chovsk\'y generalized the previous results to a more general class of Moufang loops \cite{KSV}. In \cite{GA21}, Giuliani and dos Anjos showed that there is no proper half-isomorphism between automorphic loops of odd order. 

Investigations on loops that have proper half-automorphisms were also made. Gagola and Giuliani established conditions for the existence of proper half automorphisms for certain Moufang loops of even order, including Chein loops \cite{GG2}. A similar work was done by Giuliani, Plaumann and Sabinina for certain diassociative loops \cite{GPS}. In \cite{GA19}, Giuliani and dos Anjos described the half-automorphism group for a class of automorphic loops of even order, and the same was done for Chein loops in \cite{G21} . In this paper, we describe the half-automorphism group of the Bol loops of the form $L_M=\mathbb{Z}_2\times\mathbb{Z}_2\times M$, where $M$ is an abelian group of exponent greater than 2.

\section{Preliminaries}

Let $L$ be a loop. Denote by $Z(L)$, $N(L)$, $N_\lambda(L)$, $N_\mu(L)$ and $N_\rho(L)$ the \emph{center}, \emph{nucleus}, \emph{left nucleus}, \emph{middle nucleus} and \emph{right nucleus} of $L$, respectively. The \emph{commutant} of $L$ is defined by $C(L) = \{a\in L\,|\,\, ax = xa \,\, \forall \,\, x \in L\}$. The nuclei of $L$ are subgroups of $L$ and the center of $L$ is an abelian subgroup of $L$. The commutant of $L$ in general is not a subloop of $L$. 


A \emph{right Bol  loop}  is a loop that satisfies the right Bol identity 

\begin{equation}\label{rbol}
x((yz)y)=((xy)z)y,
\end{equation}
and a \emph{left Bol  loop}  is a loop that satisfies the left Bol identity 

\begin{equation}\label{lbol}
(x(yx))z = x(y(xz)).
\end{equation}

Right (left) Bol loops are power-associative, right (left) alternative, and have the right (left) inverse property. Other basic results about these loops can be found in \cite{P90,R66}.

A \emph{Moufang loop} is a loop that is both a right and a left Bol loop.

Let $(L,*)$ be a loop. The \emph{opposite loop} of $L$ is the loop $L^{op} = (L,\cdot)$ with the operation $\cdot$ defined by $x\cdot y = y*x$. If $L$ is a right Bol loop, then $L^{op}$ is a left Bol loop (and vice versa).

\begin{proposition}
\label{prop2} Let $L$ be a right Bol loop. If $L$ has an anti-automorphism, then $L$ is a Moufang loop.
\end{proposition}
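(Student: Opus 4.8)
The plan is to exploit the standard correspondence between anti-automorphisms and the opposite loop construction recalled just above the statement. The first and only conceptually essential step is to observe that an anti-automorphism of $L$ is precisely an isomorphism from $L$ onto its opposite loop $L^{op}$. Indeed, suppose $f\colon L\to L$ is a bijection with $f(xy)=f(y)f(x)$ for all $x,y\in L$. Viewing $f$ as a map into $L^{op}=(L,\cdot)$, where $a\cdot b=ba$, we compute $f(x)\cdot f(y)=f(y)f(x)=f(xy)$, so $f\colon L\to L^{op}$ is a genuine loop isomorphism.

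The remaining steps are routine. Since $L$ is a right Bol loop, the preliminaries tell us that $L^{op}$ is a left Bol loop. The left Bol identity \eqref{lbol} is an equational condition and is therefore preserved under loop isomorphism; transporting it back along the isomorphism $f\colon L\to L^{op}$ shows that $L$ itself satisfies \eqref{lbol}, i.e.\ $L$ is a left Bol loop. As $L$ is right Bol by hypothesis and now also left Bol, it is both a left and a right Bol loop, which is exactly the definition of a Moufang loop, completing the argument.

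I do not anticipate any real obstacle: once anti-automorphisms are recognized as isomorphisms onto $L^{op}$, the whole statement reduces to the isomorphism-invariance of the left Bol identity together with the already-stated fact that passing to the opposite loop exchanges the two Bol conditions. A purely computational alternative would substitute $f$ into the right Bol identity \eqref{rbol} and rearrange to produce \eqref{lbol} directly, but this is messier and I would prefer the clean structural route above.
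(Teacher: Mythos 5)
Your proof is correct and follows essentially the same route as the paper: both identify the anti-automorphism as an isomorphism $L \to L^{op}$, note that $L^{op}$ is left Bol, and conclude that $L$ is both left and right Bol, hence Moufang. The only difference is that you spell out the verification that $f$ is a homomorphism into $L^{op}$ and the isomorphism-invariance of the left Bol identity, steps the paper leaves implicit.
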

\begin{proof}
 Let $\varphi$ be an anti-automorphism of $L$. Then $\varphi$ is an isomorphism from $L$ into $L^{op}$. Thus $L$ is also a left Bol loop, and hence $L$ is a Moufang loop.
\end{proof}
Consequently, if $\varphi$ is a half-automorphism of a right Bol loop $L$ that is not Moufang, then $\varphi$ is either a proper half-automorphism or an automorphism of $L$. The same can be shown for left Bol loops. In fact, every result about right Bol loops dualizes to left Bol loops. So, from now on, we will only work with right Bol loops, and we will call them simply Bol loops.

One can search for properties of half-isomorphisms of loops in \cite{GA20,GA21}. However, to make this text sort of self contained, some results follow:

\begin{proposition}
\label{prop0}(\cite[Proposition $2.2$]{GA20}) Let $Q$ and $Q'$ be loops and $f:Q \rightarrow Q'$ be a half-isomorphism. If $H$ is a subloop of $Q'$, then $f^{-1}(H)$ is a subloop of $Q$.
\end{proposition}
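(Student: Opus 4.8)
The plan is to show that $S := f^{-1}(H)$ satisfies the defining conditions of a subloop of $Q$: it is nonempty (it contains the neutral element $1$ of $Q$), it is closed under the product, and it is closed under both one-sided divisions. First I would record that any half-isomorphism fixes the identity. From $f(1\cdot 1)=f(1)$ and the fact that the only element the right-hand set can contain is $f(1)f(1)$, we get $f(1)=f(1)f(1)$; comparing this with $f(1)\cdot 1'=f(1)$ and using uniqueness of solutions in $Q'$ yields $f(1)=1'$, the identity of $Q'$. Since $H$ is a subloop it contains $1'$, so $1\in S$ and $S\neq\emptyset$.

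Next I would check closure under multiplication. Given $x,y\in S$, by definition $f(x),f(y)\in H$. The half-isomorphism property gives $f(xy)\in\{f(x)f(y),\,f(y)f(x)\}$, and since $H$ is closed under multiplication both candidates lie in $H$. Hence $f(xy)\in H$, i.e. $xy\in S$.

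The crux is closure under division, and this is where the two-valued nature of a half-isomorphism must be handled carefully. Fix $x,y\in S$ and let $a\in Q$ be the unique solution of $xa=y$. Applying $f$ gives $f(y)=f(xa)\in\{f(x)f(a),\,f(a)f(x)\}$. In the first case $f(a)$ is the solution in $Q'$ of $f(x)w=f(y)$, that is $f(a)=f(x)\backslash f(y)$; in the second case $f(a)$ is the solution of $wf(x)=f(y)$, that is $f(a)=f(y)/f(x)$. Because $H$ is a subloop it is closed under both left and right division, so in either case $f(a)\in H$, whence $a\in S$. The same argument applied to the equation $ax=y$ shows that its solution also lies in $S$.

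Thus $S$ is nonempty, closed under the product and under both divisions, hence a subloop of $Q$. The only delicate point is the division step: a half-isomorphism may send the product $xa$ to either $f(x)f(a)$ or $f(a)f(x)$, so a priori one does not know which quotient expresses $f(a)$. The resolution is simply that a subloop is closed under both divisions, so both cases are absorbed; no finiteness or associativity hypothesis is required.
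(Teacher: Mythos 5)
Your proof is correct. Note, however, that the paper itself gives no argument for this statement: it is quoted verbatim from \cite[Proposition 2.2]{GA20}, so there is no in-paper proof to compare against. Your verification is the natural direct one and is complete: the identity computation $f(1)=f(1)f(1)$ forcing $f(1)=1'$, closure of $f^{-1}(H)$ under products via $f(xy)\in\{f(x)f(y),f(y)f(x)\}\subseteq H$, and the two-case analysis for divisions, where $f(a)$ is either $f(x)\backslash f(y)$ or $f(y)/f(x)$ and both lie in $H$ because a subloop is closed under both divisions. You correctly identify that last step as the only delicate point, and your observation that neither finiteness nor bijectivity of $f$ is actually used (only the half-homomorphism property of $f$ and the loop axioms) is accurate --- this contrasts with Proposition 2.5 of the paper, whose proof does require finiteness.
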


\begin{definition}
Let $L,L'$ be loops. A half-isomorphism $f:L \rightarrow L'$ is called \emph{special} if the inverse mapping $f^{-1}:L' \rightarrow L$ is also a half-isomorphism.
\end{definition}

\begin{proposition}
\label{prop11}(\cite[Corollary $2.7$]{GA20}) Every half-automorphism of a finite loop is special.
\end{proposition}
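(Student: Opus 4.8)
The plan is to use the finiteness of $L$ to rewrite the inverse of $f$ as a positive power of $f$, and then to prove that positive powers of a half-automorphism are again half-automorphisms. Concretely, let $f$ be a half-automorphism of the finite loop $L$. Since $f$ is a bijection of the finite set $L$, it lies in the (finite) symmetric group on $L$ and hence has finite order, say $f^n=\mathrm{id}_L$ with $n\geq 1$. Then $f^{-1}=f^{n-1}$, so it is enough to show that $f^{n-1}$ is a half-isomorphism $L\to L$, which, since domain and codomain coincide, is the same as showing it is a half-automorphism.

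The crux is a closure property: the composite of two half-automorphisms of $L$ is a half-automorphism. To see this, let $g$ and $h$ be half-automorphisms and fix $x,y\in L$. As $h$ is a half-automorphism we have $h(xy)\in\{h(x)h(y),h(y)h(x)\}$; applying $g$ and using that $g$ is also a half-automorphism, each of the two possibilities yields $g(h(xy))\in\{g(h(x))g(h(y)),g(h(y))g(h(x))\}$. The four a priori cases collapse into these two because the unordered pair $\{uv,vu\}$ is symmetric in $u$ and $v$. Since $g\circ h$ is a bijection, it is indeed a half-automorphism.

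Granting this, a routine induction gives that $f^{k}$ is a half-automorphism for every $k\geq 1$: the base case $k=1$ is the hypothesis, and the inductive step $f^{k+1}=f\circ f^{k}$ follows from the closure property. Taking $k=n-1$ shows that $f^{-1}=f^{n-1}$ is a half-automorphism, hence in particular a half-isomorphism, so $f$ is special by definition.

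I do not expect a serious obstacle here; the argument is essentially the finite-order trick combined with composition-closure. The only point needing a little care is the verification that a composite respects the half-automorphism condition, and there the symmetry of the set $\{uv,vu\}$ makes every case reduce immediately. It is worth stressing that finiteness is essential: for an infinite loop one cannot in general express $f^{-1}$ as a positive power of $f$, and this is precisely where the hypothesis is used.
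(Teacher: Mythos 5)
Your proof is correct, but note that this paper contains no proof of the statement to compare against: Proposition \ref{prop11} is quoted verbatim from \cite[Corollary 2.7]{GA20}, so the argument you give is a self-contained replacement for an external citation. Your route is the natural one for half-\emph{automorphisms}: closure of half-automorphisms under composition (your case analysis is right, and the collapse of four cases into two via the symmetry of the unordered pair $\{uv,vu\}$ is exactly the point), plus the finite-order trick $f^{-1}=f^{n-1}$, which is just the standard fact that a finite set of permutations containing the identity and closed under composition is a group. Two minor remarks. First, the degenerate case $n=1$ (i.e.\ $f=\mathrm{id}_L$) is not covered by your induction, which starts at $k=1$; it is of course trivial, or you can avoid it by writing $f^{-1}=f^{2n-1}$. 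Second, be aware of the scope of your method: it uses essentially that domain and codomain coincide, so that powers of $f$ make sense. It therefore proves the stated proposition but does not extend to half-isomorphisms between two \emph{distinct} finite loops, where the finite-order trick is unavailable and one needs a counting-type argument instead; this is worth keeping in mind since the surrounding literature (including the cited source) works in that more general setting.
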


\begin{proposition}
\label{prop12}(\cite[Proposition $2.13$]{GA21}) Let $L$ and $L'$ be power-associative loops, and $f:L \to L'$ be a half-isomorphism. Then $f(x^n) =f(x)^n$, for all $x\in L$ and $n\in \mathbb{Z}$.
\end{proposition}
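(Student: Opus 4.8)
The plan is to handle nonnegative exponents first by a direct induction, then to treat inverses (and hence all negative exponents) separately, since the passage to negative powers is where the only real subtlety lies.

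First I would record that $f(1)=1$. Applying the half-isomorphism property to $1\cdot 1$ gives $f(1)\in\{f(1)f(1)\}$, so $f(1)=f(1)^2$, and cancellation in $L'$ forces $f(1)=1$; this settles $n=0$, while $n=1$ is trivial. For the inductive step, suppose $f(x^n)=f(x)^n$ for some $n\ge 1$. Writing $x^{n+1}=x^n\cdot x$ and using that $f$ is a half-isomorphism,
\[
f(x^{n+1})\in\{f(x^n)f(x),\,f(x)f(x^n)\}=\{f(x)^n f(x),\,f(x)f(x)^n\}.
\]
Because $L'$ is power-associative, the monogenic subloop $\langle f(x)\rangle$ is a cyclic group, so both candidates equal $f(x)^{n+1}$; hence $f(x^{n+1})=f(x)^{n+1}$, and the claim holds for all $n\ge 0$.

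The hard part is the passage to negative exponents, because in a general loop the relation $f(x)f(x^{-1})=1$ (which is all that the half-isomorphism property yields from $f(x\cdot x^{-1})=f(1)=1$) need not force $f(x^{-1})$ to be the two-sided inverse of $f(x)$. The key idea I would use to circumvent this is to locate $f(x^{-1})$ inside the cyclic group $\langle f(x)\rangle$. By Proposition \ref{prop0}, the set $f^{-1}(\langle f(x)\rangle)$ is a subloop of $L$; it contains $x$, hence it contains the monogenic subloop generated by $x$, which by power-associativity is exactly the cyclic group $\{x^m:m\in\mathbb{Z}\}$. In particular $x^{-1}$ lies in it, so $f(x^{-1})\in\langle f(x)\rangle$. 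Since a one-sided inverse inside the group $\langle f(x)\rangle$ is automatically two-sided, $f(x)f(x^{-1})=1$ now gives $f(x^{-1})=f(x)^{-1}$.

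Finally I would combine these facts. Applying the already-proved nonnegative case to the element $x^{-1}$ gives, for every $m\ge 0$,
\[
f(x^{-m})=f\bigl((x^{-1})^m\bigr)=f(x^{-1})^m=\bigl(f(x)^{-1}\bigr)^m=f(x)^{-m},
\]
which together with the nonnegative case yields $f(x^n)=f(x)^n$ for all $n\in\mathbb{Z}$. I expect the only delicate point to be justifying $f(x^{-1})=f(x)^{-1}$; everything else reduces to associativity within a single cyclic group, which power-associativity supplies on both sides.
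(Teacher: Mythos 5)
The paper does not actually prove this statement: it is quoted from \cite[Proposition 2.13]{GA21} and used as a black box, so there is no internal proof to compare yours against. Judged on its own merits, your argument is correct and self-contained within the results this paper does make available. The induction for $n\geq 0$ is routine, and your treatment of the genuinely delicate point — why $f(x^{-1})$ must be the two-sided inverse of $f(x)$ — is sound: Proposition \ref{prop0} makes $f^{-1}(\langle f(x)\rangle)$ a subloop of $L$ containing $x$, hence containing the monogenic subloop $\langle x\rangle\ni x^{-1}$, so $f(x^{-1})$ lies in the group $\langle f(x)\rangle$, where a one-sided inverse is automatically two-sided. One small wording correction: applying the half-isomorphism property to $x\cdot x^{-1}=1$ yields only that \emph{one} of $f(x)f(x^{-1})$, $f(x^{-1})f(x)$ equals $1$, not specifically the first; this costs nothing, since either relation, read inside the group $\langle f(x)\rangle$, gives $f(x^{-1})=f(x)^{-1}$. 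With that emendation the proof is complete, and it has the virtue of relying only on Proposition \ref{prop0} together with power-associativity on both sides — exactly the toolkit the present paper sets up before invoking the cited result.
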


As a direct consequence of Proposition \ref{prop12}, we have that every half-isomorphism between power-associative loops preserves the order of the elements.

\section{A construction of a class of Bol loops}\label{bol}

We start this section pointing out a result from Foguel, Kinyon and Phillips \cite{FKP06} that establish when one can define a Bol loop over a right transversal of a group. Let $G$ be a group. A subset $B$ of $G$ is called a \emph{twisted subgroup} if $1,x^{-1},xyx\in B$, for all $x,y\in B$.

\begin{proposition}
\label{prop1}(\cite[Proposition $5.2$]{FKP06}) Let $G$ be a group, $H\leq G$, and $B\subset G$ a right transversal of  $H$ in $G$. If $B$ is a twisted subgroup of $G$, then $B$ with the operation

\begin{equation}\label{eqprop1}
x\cdot y = z, \textrm{ if } xy = hz, \textrm{ for some } h\in H,
\end{equation}
is a Bol loop. Conversely, if $H$ is core-free and $(B,\cdot)$ is a Bol loop, then $B$ is a twisted subgroup of $G$.
\end{proposition}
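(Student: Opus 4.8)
The plan is to realize the loop operation through the canonical retraction onto the transversal and to reduce every loop-theoretic assertion to a statement about right cosets in $G$. Write $\theta\colon G\to B$ for the map sending $g$ to the unique element of $B$ in the coset $Hg$, so that $x\cdot y=\theta(xy)$ for $x,y\in B$. The one computation I would isolate at the outset is the right-linearity of $\theta$: since $g=h\,\theta(g)$ for some $h\in H$, the elements $gb$ and $\theta(g)\,b$ lie in the same right coset, whence
\[
\theta(gb)=\theta(g)\cdot b\qquad(g\in G,\ b\in B).
\]
From $1\in B$ one reads off that $1$ is a two-sided identity, and from $x^{-1}\in B$ together with the displayed identity one gets $x\cdot x^{-1}=\theta(1)=1=x^{-1}\cdot x$, so the group inverse is the loop inverse; the same identity yields the right inverse property $(x\cdot y)\cdot y^{-1}=\theta(xy\,y^{-1})=x$. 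Right translations $R_b$ are bijective using only that $B$ is a transversal, since $\theta(xb)=\theta(x'b)$ forces $Hx=Hx'$.

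The first genuinely nontrivial point is that the left translations $L_a$ are bijective, i.e. that $(B,\cdot)$ is a loop and not merely a right loop. Here I would use the Bol reflection $\rho_a(x)=ax^{-1}a$, which maps $B$ to $B$ by the twisted-subgroup closure $xyx\in B$ and is an involution because $a(ax^{-1}a)^{-1}a=x$. Writing $\iota$ for inversion on $B$, the right-linearity of $\theta$ gives $\rho_a(x^{-1})\cdot a^{-1}=\theta(axa\,a^{-1})=\theta(ax)=a\cdot x$, that is,
\[
L_a=R_{a^{-1}}\circ\rho_a\circ\iota,
\]
a composite of three bijections of $B$; hence $L_a$ is a bijection and $(B,\cdot)$ is a loop. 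For the right Bol identity I would simply push everything through $\theta$: repeated right-linearity collapses the left side to $\theta(xyzy)$, while on the right side $(y\cdot z)\cdot y=\theta(yzy)=yzy$ lands back in $B$ precisely by the twisted-subgroup axiom, so $x\cdot((y\cdot z)\cdot y)=\theta(xyzy)$ as well. Thus the twisted-subgroup hypothesis is used exactly twice: once to make $\rho_a$ preserve $B$, and once to ensure $yzy\in B$.

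For the converse I would pass to the faithful action of $G$ on the right cosets of $H$. Core-freeness first forces the loop identity to be $1$: if $e$ is the identity then $e\cdot x=x$ gives $e\in H$ and $x\cdot e=x$ gives $xex^{-1}\in H$ for all $x\in B$, so writing an arbitrary $g=hb$ one finds $geg^{-1}\in H$, whence $e\in\mathrm{core}(H)=1$ and $1\in B$. Identifying the coset space with $B$, the representation $\pi(g)\colon b\mapsto\theta(bg)$ is injective by core-freeness and satisfies $\pi(b)=R_b$ for $b\in B$. Now the Bol loop structure says exactly that $\{R_b:b\in B\}$ is a twisted subgroup of $\mathrm{Sym}(B)$: it contains $R_1=\mathrm{id}$, it is closed under inverses because the right inverse property gives $R_b^{-1}=R_{J(b)}$ with $J(b)\in B$ the loop inverse, and it is closed under $R_{b_1}R_{b_2}R_{b_1}=R_{(b_1\cdot b_2)\cdot b_1}$, which is just the right Bol identity rewritten with translations. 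Since $\pi$ carries $B$ bijectively onto this twisted subgroup and an (anti-)isomorphism preserves the defining relations $1$, $x^{-1}$ and the palindrome $xyx$, pulling back shows that $B$ is a twisted subgroup of $G$.

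I expect the main obstacle to be the two places where the coset combinatorics must be controlled: proving that the left translations are bijective in the forward direction (the reflection factorization above is the device that makes this clean rather than an opaque counting argument), and, in the converse, the correct use of core-freeness, both to pin the identity down to $1$ and to guarantee that the permutation representation is faithful, so that the twisted-subgroup relations read off in $\mathrm{Sym}(B)$ can be transported back to $G$.
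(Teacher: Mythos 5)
Your proof is correct, but be aware that there is no internal argument to compare it with: the paper states Proposition \ref{prop1} as a quoted result from \cite[Proposition 5.2]{FKP06} and gives no proof, so your proposal can only stand as a self-contained replacement for that citation, which it does. The right-linearity $\theta(gb)=\theta(g)\cdot b$ of the retraction carries the whole forward direction: it yields the right inverse property, collapses $((x\cdot y)\cdot z)\cdot y$ to $\theta(xyzy)$, and, combined with the twisted-subgroup axiom placing $(y\cdot z)\cdot y=yzy$ back inside $B$, gives the right Bol identity; the factorization $L_a=R_{a^{-1}}\circ\rho_a\circ\iota$ through the Bol reflection $\rho_a(x)=ax^{-1}a$ is a clean way to settle bijectivity of left translations, which is the one genuinely delicate point in this direction. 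In the converse, pinning the loop identity to $1$ via core-freeness and then reading the twisted-subgroup relations off the right translations $\{R_b\}$ in $\mathrm{Sym}(B)$ is also sound. Two small completions are worth adding. First, for right translations you only spell out injectivity; since no finiteness is assumed, note surjectivity as well, which is equally immediate: the unique solution of $x\cdot b=z$ is $x=\theta(zb^{-1})$, because $\theta(zb^{-1})\cdot b=\theta(zb^{-1}b)=z$. Second, the map $\pi(g)\colon b\mapsto\theta(bg)$ reverses products, i.e.\ it is an anti-homomorphism into $\mathrm{Sym}(B)$, so transporting the relations $1$, $x^{-1}$, $xyx$ back through the injective $\pi$ is legitimate precisely because these relations are palindromic and hence invariant under order reversal; you gesture at this with ``(anti-)isomorphism,'' but it merits an explicit sentence, since it is exactly what makes the pull-back step valid (and, via $\pi(b_1b_2b_1)=R_{b_1}R_{b_2}R_{b_1}=R_{(b_1\cdot b_2)\cdot b_1}$, it even recovers the identity $b_1b_2b_1=(b_1\cdot b_2)\cdot b_1$ used in the forward direction).
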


Let $M$ be an abelian group. Define $L_M = \mathbb{Z}_2\times \mathbb{Z}_2 \times M$ and consider the following operation on $L_M$:

\begin{equation}
\label{eq21}
(l,s,x)* (u,v,y) =  \left\{\begin{array}{rl}
(l,s,xy), & \textrm{if } u=v= 0, \\
(l+u,s+v,x^{-1}y), & \textrm{otherwise}.
\end{array}\right.
\end{equation}
In the following, we will show that $(L_M,*)$ is a Bol loop.

The \emph{generalized dihedral group of $M$} can be defined by $D(M) = M\cup Mr$, where $r\not \in M$, $r^2 = 1$ and $rxr = x^{-1}$, for every $x\in M$. Consider the direct product $G = \mathbb{Z}_2\times \mathbb{Z}_2 \times D(M)$. We have that $H = 0\times 0 \times \{1,r\}$ is a subgroup of $G$ of order $2$. Let 

\begin{center}
$B = \{(0,0,x),(l,s,rx)\,|\, x\in M,\,l,s\in \mathbb{Z}_2,(l,s)\not = (0,0)\}$.
\end{center}

The set $B$ is a right transversal of $H$ in $G$. Furthermore, $B$ contains $(0,0,1)$, the identity of $G$.

\begin{proposition}
\label{prop21} $B$ is a twisted subgroup of $G$.
\end{proposition}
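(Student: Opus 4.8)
The plan is to verify directly the three defining conditions of a twisted subgroup: that $1\in B$, that $b^{-1}\in B$ for all $b\in B$, and that $aba\in B$ for all $a,b\in B$. The first condition is immediate, since $(0,0,1)$ is the element of $B$ of the first type with $x=1$, and this is the identity of $G$.

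For the second condition I would split into the two types of elements of $B$. An element $(0,0,x)$ has inverse $(0,0,x^{-1})$, which is again of the first type. For an element $(l,s,rx)$ of the second type, the key observation is that every reflection of $D(M)$ is an involution: using $rxr=x^{-1}$ one computes $(rx)(rx)=(rxr)x=x^{-1}x=1$. Hence $(l,s,rx)^{-1}=(l,s,rx)\in B$, recalling that $-l=l$ and $-s=s$ in $\mathbb{Z}_2$.

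The main step is the third condition. Writing $a=(l_a,s_a,g_a)$ and $b=(l_b,s_b,g_b)$ and using that multiplication in $G$ is componentwise, the $\mathbb{Z}_2$-coordinates of $aba$ are $l_a+l_b+l_a=l_b$ and $s_a+s_b+s_a=s_b$, since $\mathbb{Z}_2$ has characteristic $2$. Thus
\[
aba=(l_b,\,s_b,\,g_a g_b g_a).
\]
It therefore remains to control the $D(M)$-coordinate, and for this I would use the sign homomorphism $\epsilon\colon D(M)\to\mathbb{Z}_2$ sending each element of $M$ to $0$ and each reflection to $1$; then $\epsilon(g_a g_b g_a)=\epsilon(g_b)$, so $g_a g_b g_a$ lies in $M$ if and only if $g_b$ does.

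Combining these two facts makes the membership conditions line up exactly. If $b$ is of the first type, then $g_b\in M$ and $(l_b,s_b)=(0,0)$, so $g_a g_b g_a\in M$ and $aba=(0,0,g_a g_b g_a)$ is again of the first type. If $b$ is of the second type, then $g_b$ is a reflection and $(l_b,s_b)\neq(0,0)$, so $g_a g_b g_a$ is a reflection and $aba=(l_b,s_b,g_a g_b g_a)$ is again of the second type. In both cases $aba\in B$. The point I expect to be the crux—and the reason $B$ is defined the way it is—is precisely this matching: the $\mathbb{Z}_2\times\mathbb{Z}_2$-coordinates of $aba$ collapse to those of $b$, while the rotation/reflection type of its $D(M)$-coordinate is also governed by $b$ alone, so the parity condition distinguishing the two types of elements of $B$ is automatically preserved.
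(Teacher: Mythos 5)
Your proof is correct, but it takes a genuinely different route from the paper's. The paper verifies closure under $X,Y\mapsto XYX$ by direct computation: it splits into the four combinations of types of $X$ and $Y$ and evaluates each product explicitly in $G$, for instance $(l,s,rx)(u,v,ry)(l,s,rx)=(u,v,ry^{-1}x^2)$, checking case by case that the result lies in $B$. You instead observe that membership in $B$ depends only on the image of an element under the natural homomorphism $G\to\mathbb{Z}_2\times\mathbb{Z}_2\times D(M)/M\cong\mathbb{Z}_2^3$ (your two parity computations are exactly this): since the target is an elementary abelian $2$-group, the image of $aba$ equals the image of $b$, so $aba$ has the same ``type'' as $b$ and hence lies in $B$. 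This handles all four cases uniformly, makes transparent why the definition of $B$ forces closure, and in fact proves something slightly stronger than needed---$gbg\in B$ for \emph{every} $g\in G$ and $b\in B$, since your argument never uses that $a\in B$. What the paper's computation buys instead is the explicit form of the products $XYX$, but those formulas are not reused later, so nothing is lost. The treatment of inverses (and of the identity) is essentially identical in both proofs.
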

\begin{proof}
Since $(l,s,rx)^{-1} = (l,s,rx)$ and $(0,0,x)^{-1} = (0,0,x^{-1})$, for every $l,s\in \mathbb{Z}_2$ and $x\in M$, we get that $B$ is closed under inverses. Let $x,y\in M$ and $l,s,u,v\in \mathbb{Z}_2$ be such that $(l,s)\not = (0,0)$ and $(u,v)\not = (0,0).$ Then

\begin{center}
$\begin{array}{lclcl}
(0,0,x)(0,0,y)(0,0,x) &=& (0,0,xyx),&&\\

(0,0,x)(l,s,ry)(0,0,x) &=& (l,s,x ry x) &=& (l,s,ry),\\

(l,s,rx)(0,0,y)(l,s,rx) &=& (2l,2s,rx y rx) &=& (0,0,y^{-1}),\\

(l,s,rx)(u,v,ry)(l,s,rx) &=& (2l+u,2s+v,rx ry rx) &=& (u,v,ry^{-1}x^2).
\end{array}$
\end{center}
Thus $XYX\in B$, for every $X,Y\in B$, and hence $B$ is a twisted subgroup of $G$.
\end{proof}

As a consequence of Propositions \ref{prop1} and \ref{prop21}, we have that $(B,\cdot)$ is a Bol loop, where $\cdot$ is the operation given by \eqref{eqprop1}. For $x,y\in M$ and $l,s,u,v\in \mathbb{Z}_2$ such that $(l,s)\not = (0,0)$, $(u,v)\not = (0,0)$ and $(l+u,s+v)\not = (0,0)$, we have:

\begin{center}
$\begin{array}{lcl}
(0,0,x)(0,0,y) &=& (0,0,1)(0,0,xy), \\
(0,0,x)(l,s,ry) &=& (0,0,1)(l,s,rx^{-1}y),\\
(l,s,rx)(0,0,y) &=& (0,0,1)(l,s,rxy),\\
(1,1,rx)(1,1,ry) &=& (0,0,1)(0,0,x^{-1}y),\\
(l,s,rx)(u,v,ry) &=& (0,0,r)(l+u,s+v,rx^{-1}y),
\end{array}$
\end{center}
and then the operation $\cdot$ is given by the following rules:

\begin{equation}
\label{eq22}
\begin{array}{lcl}
(0,0,x)\cdot (0,0,y) &=& (0,0,xy),\\
(0,0,x)\cdot (l,s,ry) &=& (l,s,rx^{-1}y),\\
(l,s,rx)\cdot (0,0,y) &=& (l,s,rxy),\\
(1,1,rx)\cdot (1,1,ry) &=& (0,0,x^{-1}y),\\
(l,s,rx)\cdot (u,v,ry) &=& (l+u,s+v,rx^{-1}y).
\end{array}
\end{equation}

Define $\psi: (L_n,*) \to (B,\cdot)$; $\psi((0,0,x))\mapsto (0,0,x)$ and $\psi((l,s,x))\mapsto (l,s,rx)$, where $(l,s)\not = (0,0)$. Comparing \eqref{eq21} and \eqref{eq22}, we get that $\psi$ is an isomorphism, and hence $(L_M,*)$ is a Bol loop.

\begin{proposition}
\label{prop22} If $M$ is an elementary abelian $2$-group, then $(L_M,*)$ is also an elementary abelian $2$-group. If $M$ is not an elementary abelian $2$-group, then $(L_M,*)$ is a nonassociative, noncommutative Bol loop.
\end{proposition}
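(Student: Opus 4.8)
The plan is to handle the two cases separately, working directly from the multiplication rule \eqref{eq21}. A useful preliminary observation is that the first two coordinates always combine additively: writing out both branches of \eqref{eq21} shows that the $\mathbb{Z}_2\times\mathbb{Z}_2$ part of $(l,s,x)*(u,v,y)$ equals $(l+u,s+v)$ in either case (adding $(0,0)$ changes nothing), so only the third coordinate sees the case distinction. This reduces every verification to a computation in the $M$-component.

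For the first assertion, suppose $M$ is an elementary abelian $2$-group, so that $x^{-1}=x$ for all $x\in M$. Then the two branches of \eqref{eq21} coincide, and one obtains $(l,s,x)*(u,v,y)=(l+u,s+v,xy)$ in all cases. This is precisely the operation of the direct product group $\mathbb{Z}_2\times\mathbb{Z}_2\times M$, which is an elementary abelian $2$-group because each of its factors is. Hence $(L_M,*)$ is an elementary abelian $2$-group.

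For the second assertion, suppose $M$ is not an elementary abelian $2$-group, so there is some $a\in M$ with $a^2\neq 1$. Since we already know $(L_M,*)$ is a Bol loop, it suffices to produce one failure of commutativity and one of associativity. For noncommutativity I would take $A=(0,0,a)$ and $B=(1,0,1)$ and compute from \eqref{eq21} that $A*B=(1,0,a^{-1})$ while $B*A=(1,0,a)$; these are distinct exactly because $a^2\neq 1$.

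For nonassociativity the delicate point---and the step I expect to require the most care---is the choice of the triple, because most naive choices happen to associate. Tracking the third coordinate of a triple product with $(l_1,s_1,x_1)$ on the left, the exponent $\pm 1$ applied to $x_1$ flips according to whether the relevant right-hand factor lies in the $(0,0)$-class, and these signs cancel in every configuration except when the two right-hand factors both have nonzero first pair and these pairs are distinct from each other. Guided by this, I would take $A=(0,0,a)$, $B=(1,0,1)$, $C=(0,1,1)$; a direct computation with \eqref{eq21} then gives $(A*B)*C=(1,1,a)$ and $A*(B*C)=(1,1,a^{-1})$, which differ precisely because $a^2\neq 1$. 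This establishes nonassociativity and completes the argument.
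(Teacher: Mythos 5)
Your proof is correct, but it takes a genuinely different route from the paper's in both halves. For the first assertion, the paper works with the isomorphic copy $(B,\cdot)$ and invokes loop theory: from \eqref{eq22} it sees that the loop is commutative of exponent $2$, hence a commutative Moufang loop of exponent $2$, hence an elementary abelian $2$-group. Your observation that when $x^{-1}=x$ the two branches of \eqref{eq21} collapse to the single rule $(l,s,x)*(u,v,y)=(l+u,s+v,xy)$ is more elementary and self-contained: it identifies $(L_M,*)$ outright as the direct product group $\mathbb{Z}_2\times\mathbb{Z}_2\times M$, with no appeal to facts about commutative Moufang loops. For the second assertion, the paper uses a single computation where you use two: taking $x$ with $x\neq x^{-1}$, it checks that the flexible identity fails, namely $((0,1,x)*(1,0,x))*(0,1,x)=(1,0,x)$ while $(0,1,x)*((1,0,x)*(0,1,x))=(1,0,x^{-1})$; this gives nonassociativity directly and noncommutativity indirectly, since any commutative magma is flexible (under commutativity both $(AB)A$ and $A(BA)$ equal $A(AB)$). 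Your two separate counterexamples are equally valid and arguably more transparent, and your sign-tracking heuristic for locating a nonassociating triple is sound. The one thing the paper's choice buys that yours does not: Corollary \ref{cor22} cites precisely the failure of the \emph{flexible} identity "seen in the proof of Proposition \ref{prop22}" to conclude that $L_M$ is not Moufang, and your witnesses do not exhibit that failure --- indeed your pair $A=(0,0,a)$, $B=(1,0,1)$ is flexible, as $(A*B)*A=(1,0,1)=A*(B*A)$, and your nonassociating triple $A,B,C$ is not of the flexible form. So if your proof were substituted for the paper's, the proof of Corollary \ref{cor22} would need to be supplemented with a flexibility counterexample (for instance the paper's own), since nonassociativity and noncommutativity alone do not rule out Moufang.
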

\begin{proof}
If $M$ is an elementary abelian $2$-group, then $x^{-1} = x$, for every $x\in M$, and so  we can see that $(B,\cdot)$ is commutative and has exponent $2$ by \eqref{eq22}. Thus $(B,\cdot)$ is a commutative Moufang loop of exponent $2$, and hence it is an elementary abelian $2$-group. Now consider that $M$ is not an elementary abelian $2$-group. Then there exists $x\in M$ such that $x\not = x^{-1}$. Using \eqref{eq21}, we get:

\begin{center}
$\begin{array}{lclcl}
((0,1,x)*(1,0,x))*(0,1,x) &=& (1,1,0)*(0,1,x) &=& (1,0,x),\\
(0,1,x)*((1,0,x)*(0,1,x)) &=& (0,1,x)*(1,1,0) &=& (1,0,x^{-1}).
\end{array}$
\end{center}
Then $(L_M,*)$ is nonassociative and noncommutative.
\end{proof}

\begin{corollary}
\label{cor22}  If $M$ is an abelian group with exponent greater than 2, then $(L_M,*)$ is not a Moufang loop. In particular, $L_M$ has no anti-automorphisms.
\end{corollary}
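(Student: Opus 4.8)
The plan is to prove the first assertion by exhibiting a failure of the flexible law in $L_M$, and then to deduce the second assertion from Proposition \ref{prop2}. Recall that $L_M$ has already been shown to be a (right) Bol loop, so the only thing to rule out is that it is \emph{also} a left Bol loop, i.e. Moufang.

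First I would record that, since $M$ has exponent greater than $2$, there is an element $a\in M$ with $a^2\neq 1$. I would then invoke the standard fact that every Moufang loop is flexible, that is, satisfies $x(yx)=(xy)x$; it therefore suffices to produce two elements of $L_M$ violating this identity. I would take $x=(0,1,a)$ and $y=(1,0,1)$ and evaluate both sides directly from the multiplication rule \eqref{eq21}, tracking at each step which branch applies. Because every intermediate factor has a nonzero pair in its first two coordinates, the ``otherwise'' branch is used throughout, and the computation yields $x*(y*x)=(1,0,1)$ while $(x*y)*x=(1,0,a^2)$. Since $a^2\neq 1$, these are distinct, so $L_M$ is not flexible and hence not Moufang, which is the first claim.

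For the ``in particular'' clause I would argue by contradiction: if $L_M$ admitted an anti-automorphism, then by Proposition \ref{prop2} the right Bol loop $L_M$ would be Moufang, contradicting the first part. Hence $L_M$ has no anti-automorphisms.

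The only delicate point is the bookkeeping in the flexibility computation: the test elements must be chosen so that the inverting twist $x\mapsto x^{-1}$ appearing in the ``otherwise'' branch of \eqref{eq21} leaves an uncancelled factor $a^2$ in the final $M$-coordinate of one side but not the other. This is precisely what distinguishes the case of exponent greater than $2$ from the elementary abelian $2$-group case of Proposition \ref{prop22}, where $a^2=1$ and flexibility is restored. Beyond this choice there is no conceptual obstacle; commutativity of $M$ is used only to simplify the $M$-coordinates.
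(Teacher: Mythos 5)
Your proposal is correct and follows essentially the same route as the paper: exhibit a failure of the flexible identity (which every Moufang loop satisfies) using an element of $M$ of order greater than $2$, and then apply Proposition \ref{prop2} to exclude anti-automorphisms. The only difference is cosmetic --- the paper reuses the computation already done in the proof of Proposition \ref{prop22} with the pair $(0,1,x)$, $(1,0,x)$, while you redo it with $(0,1,a)$, $(1,0,1)$; both computations are valid.
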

\begin{proof}
In the proof of Proposition \ref{prop22}, we saw that $(L_M,*)$ does not satisfy the flexible identity, and then $(L_M,*)$ is not a Moufang loop. The rest of the claim follows from Proposition \ref{prop2}.
\end{proof}

Let $L$ be a power associative loop and let $x$ be an element of $L$. Then the order of $x$ will be denoted by $o(x)$. From \eqref{eq21}, we can easily obtain the order of the elements of $L_M$ and a straightfoward calculation gives us the nuclei, commutant and the center of $L_M$. 

\begin{proposition}
\label{prop23} Let $(u,v,x)$ be an element of $L_M$. Then
\begin{equation}
\nonumber
o((u,v,x)) =  \left\{\begin{array}{rl}
o(x), & \textrm{if } u=v= 0, \\
2, & \textrm{otherwise}.
\end{array}\right.
\end{equation}
\end{proposition}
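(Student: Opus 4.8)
The plan is to treat the two cases in the statement separately, in each case reducing to a direct computation with the multiplication rule \eqref{eq21}. Recall that Bol loops are power-associative, so each power $(u,v,x)^n$ is a well-defined element of $L_M$ that may be computed by any bracketing; I will use the left-nested one, $(u,v,x)^n = (u,v,x)^{n-1}*(u,v,x)$. Note also that the identity of $L_M$ is $(0,0,1)$, which is what I will compare powers against.

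First suppose $u=v=0$. I would show by induction on $n\geq 1$ that $(0,0,x)^n=(0,0,x^n)$. The base case is immediate, and in the inductive step the right-hand factor $(0,0,x)$ has its first two coordinates equal to $0$, so the first line of \eqref{eq21} applies and yields $(0,0,x^{n-1})*(0,0,x)=(0,0,x^{n-1}x)=(0,0,x^n)$. Consequently $(0,0,x)^n=(0,0,1)$ if and only if $x^n=1$, whence $o((0,0,x))=o(x)$.

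Now suppose $(u,v)\neq(0,0)$. Here a single application of the second line of \eqref{eq21} suffices: since the right-hand factor does not have its first two coordinates equal to $(0,0)$, we obtain $(u,v,x)*(u,v,x)=(u+u,v+v,x^{-1}x)=(0,0,1)$, using $2u=2v=0$ in $\mathbb{Z}_2$ and $x^{-1}x=1$. As $(u,v,x)$ is distinct from the identity $(0,0,1)$ (its first two coordinates are not both $0$), its order is exactly $2$.

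There is no genuine obstacle here; the only point that requires a word of care is the appeal to power-associativity to justify computing the power by left-nesting in the first case, after which both computations are routine applications of \eqref{eq21}.
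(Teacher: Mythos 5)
Your proof is correct and follows exactly the route the paper intends: the paper omits the proof, stating only that the orders are "easily obtained" from \eqref{eq21}, and your two-case computation (induction on powers for $u=v=0$, a single squaring otherwise, with power-associativity justifying the unambiguous meaning of powers) is precisely that straightforward calculation spelled out.
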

\begin{proposition}
\label{prop24} For the Bol loop $L_M$, the following holds
\begin{center}
$N_ \lambda (L_M) = \{(u,v,x)\,|\, o(x) = 2\}, N_ \mu (L_M) =N_ \rho (L_M) = \{(0,0,x)\,|\, x\in M\}, \break $ and $  \mathcal{Z}(L_M) = N(L_M) = C(L_M) =  \{(0,0,x)\,|\, o(x) = 2\}.$
\end{center}
\end{proposition}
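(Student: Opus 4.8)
The plan is to reduce all five computations to identities in the abelian group $M$ by first writing the operation in a uniform way. Introduce the local notation $\sigma(u,v)=1$ when $(u,v)=(0,0)$ and $\sigma(u,v)=-1$ otherwise; then \eqref{eq21} becomes $(l,s,x)*(u,v,y)=(l+u,s+v,x^{\sigma(u,v)}y)$. Taking $a=(l_1,s_1,x_1)$, $b=(l_2,s_2,x_2)$, $c=(l_3,s_3,x_3)$ and using that $M$ is abelian, I would expand both sides of the associative law; the first two coordinates of $(a*b)*c$ and $a*(b*c)$ always coincide, and their third coordinates are $x_1^{\sigma(l_2,s_2)\sigma(l_3,s_3)}x_2^{\sigma(l_3,s_3)}x_3$ and $x_1^{\sigma(l_2+l_3,s_2+s_3)}x_2^{\sigma(l_3,s_3)}x_3$. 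After cancelling the common factor $x_2^{\sigma(l_3,s_3)}x_3$, the triple $(a,b,c)$ associates if and only if
\[ x_1^{\sigma(l_2,s_2)\sigma(l_3,s_3)}=x_1^{\sigma(l_2+l_3,s_2+s_3)}, \]
an identity that involves only $x_1$ and the $\mathbb{Z}_2\times\mathbb{Z}_2$-parts of $b$ and $c$.

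The crux is a combinatorial property of $\sigma$: one checks directly that $\sigma(\beta)\sigma(\gamma)=\sigma(\beta+\gamma)$ for all $\beta,\gamma\in\mathbb{Z}_2\times\mathbb{Z}_2$ except when $\beta$ and $\gamma$ are distinct nonzero elements, in which case the left side is $1$ while $\sigma(\beta+\gamma)=-1$ (their sum being the third nonzero element). Hence the displayed identity is automatic unless $(l_2,s_2)$ and $(l_3,s_3)$ are distinct and nonzero, where it collapses to $x_1^2=1$. From this each nucleus is read off by fixing one of the three positions and quantifying over the other two. For $N_\lambda$ the element $a$ is fixed, so the requirement is exactly $x_1^2=1$ with $l_1,s_1$ free, giving $N_\lambda=\{(u,v,x)\,|\,o(x)=2\}$. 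For $N_\rho$ one fixes $c$: if its $\mathbb{Z}_2\times\mathbb{Z}_2$-part is zero the identity holds for all $a,b$, while if it is nonzero I would choose $(l_2,s_2)$ to be a different nonzero element and an $x_1$ with $x_1^2\neq 1$ to force a failure; this uses the hypothesis that $M$ has exponent greater than $2$. The same argument with $b$ fixed handles $N_\mu$, yielding $N_\mu=N_\rho=\{(0,0,x)\,|\,x\in M\}$.

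For the commutant I would compute $a*b=(l+u,s+v,x^{\sigma(u,v)}y)$ and $b*a=(u+l,v+s,y^{\sigma(l,s)}x)$ for $a=(l,s,x)$, $b=(u,v,y)$; the first two coordinates always agree, and equality of the third reduces in $M$ to $x^{\sigma(u,v)-1}=y^{\sigma(l,s)-1}$ for every $(u,v)$ and every $y$. Setting $(u,v)=(0,0)$ and letting $y$ vary forces $\sigma(l,s)=1$, i.e.\ $(l,s)=(0,0)$; then taking $(u,v)\neq(0,0)$ forces $x^2=1$, so $C(L_M)=\{(0,0,x)\,|\,o(x)=2\}$. Finally $N(L_M)=N_\lambda\cap N_\mu\cap N_\rho=\{(0,0,x)\,|\,o(x)=2\}$, and since this set is contained in $C(L_M)$ the center $\mathcal{Z}(L_M)=N(L_M)\cap C(L_M)$ equals the same set, so the three coincide. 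I expect the only real difficulty to be bookkeeping: once associativity is distilled into the displayed identity and the failure of multiplicativity of $\sigma$ is localized, every assertion follows by testing against the three nonzero elements of $\mathbb{Z}_2\times\mathbb{Z}_2$ and a single $x\in M$ with $x^2\neq 1$.
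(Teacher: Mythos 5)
Your proof is correct and complete: the paper itself offers no argument for this proposition (it merely asserts that the nuclei, commutant and center follow from a straightforward calculation with \eqref{eq21}), and your uniform $\sigma$-notation computation is exactly that calculation carried out properly, with the key point---that $\sigma(\beta)\sigma(\gamma)=\sigma(\beta+\gamma)$ fails precisely when $\beta,\gamma$ are distinct nonzero elements of $\mathbb{Z}_2\times\mathbb{Z}_2$---correctly identified and then tested against an element of $M$ of order greater than $2$. One minor remark: your derivation produces the condition $x^2=1$ (equivalently $o(x)\leq 2$), which shows that the sets written in the statement with ``$o(x)=2$'' must be read as ``$o(x)\leq 2$'', since otherwise they would exclude the identity, which every nucleus and the center necessarily contain.
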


\section{Half-automorphisms of the Bol loop $L_M$}

In this section, suppose that $M$ is a finite abelian group with exponent greater than $2$. Let $L_M$ be the Bol loop constructed in section $\ref{bol}$. From now on, let us denote $L_M$ as $L_M=K\times M$, where $K=\{1,a,b,c\}$ is the Klein group and the multiplication in $L_M$ will be given by

\begin{center}
$\begin{array}{lclcl}
(1,x)&*&(1,y) &=& (1,xy)\\
(A,x)&*&(1,y) &=& (A,xy)\\
(1,x)&*&(B,y) &=& (B,x^{-1}y)\\
(A,x)&*&(B,y) &=& (AB,x^{-1}y),
\end{array}$
\end{center}
where $A,B\neq 1$.

Recall that $f:L_M\to L_M$ is a half-automorphism of $L_M$ if $f$ is a bijection of $L_M$ and $f(XY)\in\{f(X)f(Y),f(Y)f(X)\}$, for every $X$ and $Y$ in $L_M$. 

\begin{proposition}
\label{p31}Let $(1,M)=\{(1,x)\in L_M; x\in M\}$ and let $f$ be a half-automorphism of $L_M$. Then $f(1,M)=(1,M)$.
\end{proposition}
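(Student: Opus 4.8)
The plan is to characterize $(1,M)$ intrinsically, without reference to $f$, in a way that any half-automorphism must respect, and then read off the conclusion. The first observation is that, by Proposition \ref{prop23}, an element $(u,v,x)$ of $L_M$ has order greater than $2$ exactly when its first coordinate is trivial and $o(x)>2$; in the present notation this means that the set $S=\{(1,x)\mid o(x)>2\}$ of all elements of $L_M$ whose order exceeds $2$ is contained in $(1,M)$. Since a half-automorphism preserves orders (the order-preservation consequence of Proposition \ref{prop12}) and $f$ is a bijection, $f$ permutes the elements of each fixed order, and in particular $f(S)=S$.

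Next I would note that $(1,M)$ is a subloop of $L_M$ — indeed $(1,x)*(1,y)=(1,xy)$ shows it is a copy of the group $M$ — and that, because $M$ has exponent greater than $2$, every element of $(1,M)$ either lies in $S$ or is a product of two elements of $S$. Concretely, for $x\in M$ with $o(x)\le 2$ one chooses $t\in M$ with $o(t)>2$ (which exists by hypothesis); then $(xt)^2=x^2t^2=t^2\neq 1$, so $o(xt)>2$, and $(1,x)=(1,xt)*(1,t^{-1})$ expresses $(1,x)$ as a product of two elements of $S$. Elements of order greater than $2$ already lie in $S$, and the identity is such a product as well, e.g. $(1,t)*(1,t^{-1})$.

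With these two facts in hand the conclusion is immediate. Take any $(1,x)\in(1,M)$. If $(1,x)\in S$, then $f((1,x))\in f(S)=S\subseteq(1,M)$. Otherwise I write $(1,x)=P*Q$ with $P,Q\in S$; since $f(P),f(Q)\in f(S)=S\subseteq(1,M)$ and $(1,M)$ is closed under $*$, both $f(P)f(Q)$ and $f(Q)f(P)$ lie in $(1,M)$, whence $f((1,x))\in\{f(P)f(Q),f(Q)f(P)\}\subseteq(1,M)$. This gives $f(1,M)\subseteq(1,M)$, and as $f$ is injective and $L_M$ is finite, comparing cardinalities yields $f(1,M)=(1,M)$.

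The genuine content — and the main obstacle — is the second step. Order preservation alone identifies only the high-order elements $S$; the order-$2$ elements of $(1,M)$ are indistinguishable by order from the order-$2$ elements $(A,x)$ with $A\neq 1$ that lie outside $(1,M)$. What rescues the argument is the abelian, exponent-greater-than-$2$ structure of $M$, which forces every low-order element of $(1,M)$ to be expressible through elements of $S$, i.e. through elements $f$ is already known to fix setwise. Were $M$ of exponent $2$ this would collapse (and $L_M$ would be an elementary abelian $2$-group by Proposition \ref{prop22}), so this hypothesis is precisely where the proof draws its force.
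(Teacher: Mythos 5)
Your proof is correct, but it is organized differently from the paper's. Both arguments rest on the same two facts --- half-automorphisms preserve orders (Propositions \ref{prop12} and \ref{prop23}), and for $o(x)\le 2$, $o(t)>2$ one has $(xt)^2=t^2\neq 1$ --- but the paper runs a proof by contradiction, while you argue directly. The paper supposes some $(1,r)$ has image $(A,r')$ with $A\neq 1$ (which forces $o(r)=2$), takes $s$ of order greater than $2$, and observes that $f(1,rs)$ must lie in $(1,M)$ since $o(rs)>2$, yet an explicit computation with the multiplication rules shows that both elements of $\{f(1,r)f(1,s),f(1,s)f(1,r)\}$ have first coordinate $A\neq 1$, a contradiction. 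You instead note that the set $S$ of all elements of order greater than $2$ satisfies $f(S)=S$ and $S\subseteq (1,M)$, and that every element of $(1,M)$ factors as a product of two elements of $S$ via $(1,x)=(1,xt)*(1,t^{-1})$; the half-automorphism property and the closure of the subgroup $(1,M)$ under $*$ then give $f(1,M)\subseteq (1,M)$ with no computation in the loop beyond knowing $(1,M)$ is a subgroup. Your route makes the role of the exponent hypothesis more transparent (it is exactly what lets the high-order elements reach every element of $(1,M)$ in one product), whereas the paper's needs only the single auxiliary element $s$ rather than a factorization of every low-order element, at the price of a hands-on computation with the multiplication rules. Both proofs conclude identically, via injectivity and finiteness of $(1,M)$.
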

\begin{proof}
First, by Proposition \ref{prop12} it follows that, for every $X$ in $L_M$, $X$ and $f(X)$ have the same order. Let us denote the order of $X$ by $o(X)$. Note that if $(A,x)$ is an element of $L_M$ such that $A\neq1$,  then $o((A,x))=2$ and, for every $x\in M$, $o((1,x))=o(x)$. Since $exp(M)>2$, there exists an element $s$ in $M$ such that $o(s)>2$, then $f(1,s)=(1,s^{'})$ for some $s^{'}\in M$. Suppose, towards a contradiction, that there exists $(1,r)$ in $(1,M)$ such that $f(1,r)=(A,r^{'})$ for some $A\neq 1$ in $K$. Then $o(r)=o(1,r)=o(A,r^{'})=2$ and $(rs)^2=r^2s^2=s^2\neq1$, which implies $o(rs)>2$ and $f(1,rs)=(1,t)$ for some $t$. Thus 

\begin{center}
$f(1,t)=f(1,rs)=f((1,r)(1,s))\in\{f(1,r)f(1,s),f(1,s)f(1,r)\}=\{(A,r^{'})(1,s^{'}),(1,s^{'})(A,r^{'})\}=\{(A,r^{'}s^{'}),(A,r^{'}s^{'-1})\}$, 
\end{center}
which is a contradiction. Hence, $f(1,M)\subset(1,M)$ and, since $f$ is one-to-one and $(1,M)$ is finite,  $f(1,M)=(1,M)$.
\end{proof}

The set $(K,1)$ is a subgroup of $L_M$ isomorphic to $K$. By Propositions \ref{prop0} and \ref{prop11}, $f(K,1)$ is a subloop of $L_M$ of order $4$, and then it is a group isomorphic to $K$. Define 

\begin{center}
$\mathcal{H}_M = \{H\leq L_M\,\,;\,\, H\cong K, |H\cap (1,M)| = 1\}$.
\end{center}
\begin{proposition}
\label{pa32} Let $f$ be a half-automorphism of $L_M$. Then $f(K,1)\in \mathcal{H}_M$.
\end{proposition}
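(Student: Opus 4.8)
The plan is to reduce the statement to the single condition $|f(K,1)\cap(1,M)|=1$, since the remarks immediately preceding the proposition already supply everything else: via Propositions \ref{prop0} and \ref{prop11} they establish that $f(K,1)$ is a subloop of $L_M$ of order $4$ that is a group isomorphic to $K$. Thus membership of $f(K,1)$ in $\mathcal{H}_M$ hinges only on the size of its intersection with $(1,M)$, and all the work goes into computing that one number.

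To compute the intersection, I would first note that $f$ fixes the identity, so $f(1,1)=(1,1)\in(1,M)$, and that the remaining three elements of $(K,1)$, namely $(a,1)$, $(b,1)$ and $(c,1)$, have nontrivial first coordinate and therefore lie in the complement $L_M\setminus(1,M)$. The key step is then to invoke Proposition \ref{p31}: since $f$ maps the finite set $(1,M)$ bijectively onto itself, and $f$ is a bijection of all of $L_M$, it must also map the complement $L_M\setminus(1,M)$ onto itself. Consequently $f(a,1)$, $f(b,1)$ and $f(c,1)$ all avoid $(1,M)$, leaving $f(1,1)=(1,1)$ as the unique element of $f(K,1)$ lying in $(1,M)$. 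Hence $|f(K,1)\cap(1,M)|=1$, and combined with $f(K,1)\cong K$ this yields $f(K,1)\in\mathcal{H}_M$.

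I do not anticipate a genuine obstacle, because the substantive content has been front-loaded into Proposition \ref{p31} together with the subloop-image argument. The only point meriting a word of care is the passage from ``$f$ preserves $(1,M)$'' to ``$f$ preserves its complement'': this is legitimate precisely because $f$ is a bijection, so the image of a complement is the complement of the image, and because the conclusion of Proposition \ref{p31} is an honest set equality $f(1,M)=(1,M)$ rather than a mere inclusion.
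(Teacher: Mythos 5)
Your proposal is correct and follows essentially the same route as the paper: both reduce the claim to showing $|f(K,1)\cap(1,M)|=1$ using the preceding subloop-image remarks, and both deduce that equality from Proposition \ref{p31} together with the bijectivity of $f$. The only cosmetic difference is that the paper computes $f^{-1}\bigl(f(K,1)\cap(1,M)\bigr)=(K,1)\cap(1,M)=\{(1,1)\}$ via preimages, while you argue via preservation of the complement of $(1,M)$; these are interchangeable formulations of the same fact.
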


\begin{proof}
Let $H=f(K,1)$. We already saw that $H\cong K$. Since $f^{-1}(1,M)=(1,M)$, we have 

\begin{center}
$f^{-1}(H \cap (1,M))=f^{-1}(H) \cap f^{-1}(1,M)  = (K,1) \cap (1,M) = \{(1,1)\}$. 
\end{center}

Therefore, $|H \cap (1,M)|=1$.
\end{proof}

Now, let us describe the elements of $\mathcal{H}_M$. Let $H=\{(1,1),(A,x),(B,y),(C,z)\}$ be an element of $\mathcal{H}_M$, where $A,B,C\in K-\{1\}$ and $x,y,z\in M$. If $A=B$, then $(C,z)=(A,x)(B,y)=(A,x)(A,y)=(1,x^{-1}y)$ and $C=1$, which is a contradiction. Then $A\neq B$ and similarly, $A\neq C$ and $B\neq C$. So, $H=\{(1,1),(A,x),(B,y),(C,z)\}$, where $\{A,B,C\} = \{a,b,c\}$. Since   $(C,z) = (A,x)(B,y)=(B,y)(A,x)$, we get $z = x^{-1}y=xy^{-1}$, and then the order of $xy^{-1}$ divides $2$.  If $xy^{-1}=1$, then $x=y$ and $H=\{(1,1),(A,x),(B,x),(C,1)\}$. By $(A,x)(C,1)=(C,1)(A,x)$, we get $o(x)|2$. Then either $H = (K,1)$ or $H=\{(1,1),(A,x),(B,x),(C,1)\}$ with $o(x) = 2$. Suppose that the order of $xy^{-1}$ is equal to $2$. Then $x\not = y$ and $1 = xy^{-1}xy^{-1} = x^2y^{-2}$, and so $x^2 = y^2$. If $x = 1$, then $H=\{(1,1),(A,1),(B,y),(C,y)\}$, where $o(y) = 2$. If $y = 1$, then $H=\{(1,1),(A,x),(B,1),(C,x)\}$, where $o(x) = 2$. Consider $x,y\not = 1$. By $(A,x)(C,x^{-1}y)=(C,x^{-1}y)(A,x)$, we get $x^{-2}y =x^2y^{-1}$, and so $y^2 = x^4$. Since $x^2 = y^2$, we obtain $x^2 = x^4$ and then $x^2 = 1$. Hence, $H=\{(1,1),(A,x),(B,y),(C,xy)\}$, where $x,y\in M$ are distinct elements of order $2$. We have established the following result.

\begin{proposition}
\label{pa33} Let $H\in \mathcal{H}_M$. There are three possibilities:

(i) $H = (K,1)$,

(ii) $H = \{(1,1),(A,x),(B,x),(C,1)\}$, with $\{A,B,C\} = \{a,b,c\}$ and $o(x)=2$,

(iii) $H = \{(1,1),(A,x),(B,y),(C,xy)\}$, with $\{A,B,C\} = \{a,b,c\}$ and $x\not = y$ are elements of order equal to $2$.\\
In particular, if $M$ has odd order, then $\mathcal{H}_M = \{(K,1)\}$.
\end{proposition}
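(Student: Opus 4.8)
The plan is to fix an arbitrary $H=\{(1,1),(A,x),(B,y),(C,z)\}\in\mathcal{H}_M$ with $A,B,C\in K-\{1\}$ and $x,y,z\in M$, and to squeeze out the three possible shapes from just two structural facts: that $H$ is closed under $*$, and that $H\cong K$ is abelian. Throughout I would read off products directly from the multiplication rules for $L_M$.

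First I would argue that $A$, $B$, $C$ are pairwise distinct. If, say, $A=B$, then $(A,x)*(B,y)=(AB,x^{-1}y)=(1,x^{-1}y)$ lies in $(1,M)$; since $|H\cap(1,M)|=1$ this forces $x^{-1}y=1$, so $x=y$ and $(A,x)=(B,y)$, contradicting that these are distinct elements of $H$. The same reasoning rules out $A=C$ and $B=C$, so $\{A,B,C\}=\{a,b,c\}$; in the Klein group the product of two distinct nonidentity elements is the third, so $AB=C$.

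Next I would use commutativity. Comparing $(A,x)*(B,y)=(C,x^{-1}y)$ with $(B,y)*(A,x)=(C,xy^{-1})$ gives $z=x^{-1}y=xy^{-1}$, whence $xy^{-1}=(xy^{-1})^{-1}$ and $o(xy^{-1})\mid 2$. I would then split on the element $w=xy^{-1}$. If $w=1$, then $x=y$ and $z=1$; imposing $(A,x)*(C,1)=(C,1)*(A,x)$ yields $x^{-1}=x$, i.e. $o(x)\mid 2$, producing case (i) when $x=1$ and case (ii) when $o(x)=2$. If $o(w)=2$, then $x\neq y$ and $x^2=y^2$; the degenerate subcases $x=1$ and $y=1$ collapse, after relabelling the generators, into case (ii), while for $x,y\neq 1$ the relation $(A,x)*(C,x^{-1}y)=(C,x^{-1}y)*(A,x)$ forces $y^2=x^4$, which together with $x^2=y^2$ gives $x^2=1$; then $y^2=1$, $z=xy$, and we land in case (iii).

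All of these are routine coordinate computations, so I expect no genuine obstacle; the only point needing care is bookkeeping of the case split, specifically checking that the boundary subcases in which $x$ or $y$ equals $1$ are correctly absorbed into case (ii) rather than spawning extra forms. Finally, for the closing remark, I would observe that both cases (ii) and (iii) require an element of $M$ of order $2$, which cannot exist when $|M|$ is odd by Cauchy's theorem; hence only $(K,1)$ survives and $\mathcal{H}_M=\{(K,1)\}$.
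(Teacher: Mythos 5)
Your proof is correct and follows essentially the same route as the paper: the same case split on the order of $xy^{-1}$, the same commutativity relations (in particular $(A,x)*(C,x^{-1}y)=(C,x^{-1}y)*(A,x)$ forcing $y^2=x^4$, hence $x^2=1$), and the same absorption of the degenerate subcases $x=1$, $y=1$ into case (ii). The only cosmetic differences are that you rule out $A=B$ via $|H\cap(1,M)|=1$ rather than via the Klein-group identity $(A,x)(B,y)=(C,z)$, and that your closing remark should cite Lagrange's theorem (an odd-order group has no element of order $2$) rather than Cauchy's, which is the converse implication.
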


\begin{corollary}
\label{c31} If $H\in \mathcal{H}_M$, then $L_M = H(1,M) = (1,M)H$.
\end{corollary}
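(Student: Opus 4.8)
The plan is to use the coarse decomposition of $L_M$ into its four \emph{layers} $(X,M)=\{(X,m)\,;\,m\in M\}$, $X\in K$, and to show that each of $H(1,M)$ and $(1,M)H$ already contains every layer in full. Since $L_M=\bigcup_{X\in K}(X,M)$ is a disjoint union, establishing this gives both set identities at once.

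First I would record the structural fact that the first-coordinate projection $\pi\colon L_M\to K$, $\pi(X,m)=X$, is a loop homomorphism onto the group $K$: reading off the four multiplication rules, the first coordinate of any product is always the product of the first coordinates. Restricting $\pi$ to the subgroup $H$ gives a group homomorphism $\pi|_H\colon H\to K$ whose kernel is $H\cap(1,M)$, which has order $1$ by the definition of $\mathcal{H}_M$. As $|H|=|K|=4$, the map $\pi|_H$ is therefore an isomorphism, so $H$ contains exactly one element lying over each $X\in K$; concretely $H=\{(1,1),(a,x),(b,y),(c,z)\}$ for suitable $x,y,z\in M$. (This also drops out of the explicit list in Proposition \ref{pa33}, but the homomorphism argument avoids the case split.)

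Next I would compute the images layer by layer. For $H(1,M)$, the identity of $H$ gives $(1,1)*(1,M)=(1,M)$, while for a nontrivial $X\in K$ the element $(X,w)\in H$ lying over $X$ satisfies $(X,w)*(1,m)=(X,wm)$; as $m$ ranges over $M$ the map $m\mapsto wm$ is a bijection of $M$, so $(X,w)*(1,M)=(X,M)$. Taking the union over the four elements of $H$ yields $H(1,M)=\bigcup_{X\in K}(X,M)=L_M$. The computation for $(1,M)H$ is the mirror image: $(1,m)*(X,w)=(X,m^{-1}w)$ for $X\neq 1$, and since $m\mapsto m^{-1}w$ is also a bijection of $M$ we again obtain $(1,M)*(X,w)=(X,M)$, so $(1,M)H=L_M$.

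There is no serious obstacle here; the one point deserving attention is the inverting twist $m\mapsto m^{-1}$ appearing on the left-hand factor in the multiplication, which at first sight might seem to spoil surjectivity onto a layer. It does not, precisely because inversion is a bijection of $M$, so $m\mapsto m^{-1}w$ remains onto. A cardinality count ($4\,|M|=|L_M|$) would furnish an alternative finish once each layer is shown to be hit, but the layer-by-layer surjections already deliver the two equalities directly.
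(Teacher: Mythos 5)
Your proof is correct. The paper states this corollary without proof, treating it as immediate from Proposition \ref{pa33}: that classification already shows $H$ consists of $(1,1)$ together with exactly one element $(X,w)$ for each nontrivial $X\in K$, after which the layer computation you perform --- $(X,w)*(1,M)=(X,M)$ and $(1,M)*(X,w)=(X,M)$, using bijectivity of $m\mapsto wm$ and $m\mapsto m^{-1}w$ --- finishes the argument. Your only genuine departure is how you obtain this ``one element per fiber'' structure of $H$: instead of invoking the case analysis of Proposition \ref{pa33}, you note that the first-coordinate projection $\pi\colon L_M\to K$ is a homomorphism (which does follow from inspecting all four multiplication rules) whose restriction to the group $H$ has kernel $H\cap(1,M)$ of order $1$, hence is an isomorphism onto $K$ by cardinality. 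This is a clean, self-contained alternative that uses only the definition of $\mathcal{H}_M$ and would work even if the explicit classification were unavailable; the paper's implicit route is shorter in context only because Proposition \ref{pa33} has already been established. Both arguments are valid, and your closing remark about why the inversion twist $m\mapsto m^{-1}w$ does not obstruct surjectivity is exactly the right point to flag.
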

\begin{corollary}
\label{c31a} If $|M|$ is odd, then $(K,1)$ is the only subloop of order $4$ of $L_M$.
\end{corollary}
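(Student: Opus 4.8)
The plan is to reduce the statement to Proposition \ref{pa33}, which already shows that $\mathcal{H}_M = \{(K,1)\}$ when $|M|$ is odd. Thus it suffices to prove that every subloop $H$ of order $4$ actually lies in $\mathcal{H}_M$; that is, $H\cong K$ and $|H\cap(1,M)|=1$. Before starting I would record two facts. First, from Proposition \ref{prop23} and the oddness of $|M|$, every $x\in M\setminus\{1\}$ has odd order, so $(1,M)$ contains no element of order $2$ (indeed no element of even order), while every element $(A,z)$ with $A\neq1$ has order $2$. Second, $(1,M)$ is a subloop isomorphic to $M$, and the projection $\pi\colon L_M\to K$, $(A,z)\mapsto A$, is a loop homomorphism with kernel $(1,M)$, as one reads off directly from the four multiplication rules.

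The heart of the argument is to show $H\cap(1,M)=\{(1,1)\}$. I would first note that $H$ cannot be contained in $(1,M)$: otherwise $H$ would be a subgroup of $M$ of order $4$, which is impossible since $4\nmid|M|$. Hence $H$ contains some $(A,z)$ with $A\neq1$. Suppose, toward a contradiction, that $H$ also contains some $(1,x)$ with $x\neq1$. Then $o(x)$ is odd and at least $3$, so $(1,1),(1,x),(1,x^2)$ are three distinct elements of $H$. Applying closure, $(A,z)(1,x)=(A,zx)$ and $(A,z)(1,x^2)=(A,zx^2)$ also lie in $H$, and $(A,z),(A,zx),(A,zx^2)$ are pairwise distinct because $z,zx,zx^2$ are distinct (as $x\neq1$, $x^2\neq1$, and $x\neq x^2$). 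These six elements are all distinct, since the two triples have different first coordinates, contradicting $|H|=4$. Therefore $H\cap(1,M)=\{(1,1)\}$.

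Finally I would conclude. Since $\ker\pi=(1,M)$, the restriction $\pi|_H$ has kernel $H\cap(1,M)=\{(1,1)\}$ and is thus injective; as $|H|=|K|=4$ it is a bijection, hence an isomorphism $H\cong K$. Combined with $|H\cap(1,M)|=1$, this gives $H\in\mathcal{H}_M$, and Proposition \ref{pa33} forces $H=(K,1)$.

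The main obstacle is the middle step, ruling out that a subloop of order $4$ meets $(1,M)$ nontrivially, and this is exactly where the parity hypothesis does the work: oddness of $|M|$ simultaneously removes all elements of even order from $(1,M)$ and prevents $M$ from having a subgroup of order $4$, so the counting argument closes. I expect no genuine difficulty beyond keeping the closure computations and the distinctness bookkeeping straight.
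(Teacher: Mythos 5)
Your proof is correct, and it follows the same skeleton as the paper's: show that any subloop $H$ of order $4$ lies in $\mathcal{H}_M$ (i.e.\ $H\cong K$ and $|H\cap(1,M)|=1$) and then invoke Proposition \ref{pa33}. The differences are in how the two membership conditions are verified. The paper's proof is terse: it asserts $|H\cap(1,M)|=1$ without argument, and then deduces $H\cong K$ from Proposition \ref{prop23} (every nonidentity element of $H$ then has order $2$, and a loop of order $4$ is a group, hence the Klein group). You instead justify the intersection claim explicitly --- ruling out $H\subseteq(1,M)$ by Lagrange in the group $M$, and otherwise producing six distinct elements of $H$ by closure, contradicting $|H|=4$ --- and you obtain $H\cong K$ by restricting the projection homomorphism $\pi\colon L_M\to K$ to $H$ rather than by counting element orders. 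Both mechanisms are sound, and yours has the merit of actually proving the step the paper skips. One small point worth spelling out in your final step: deducing injectivity of $\pi|_H$ from $H\cap\ker\pi=\{(1,1)\}$ uses that $H$ is closed under right division (if $\pi(X)=\pi(Y)$ with $X,Y\in H$, take $Z\in H$ with $Z*Y=X$; then $\pi(Z)=1$, so $Z=(1,1)$ and $X=Y$), which holds automatically since $H$ is a finite subloop.
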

\begin{proof} Let $H$ be a subloop of $L_M$ of order $4$. Then $|H\cap (1,M)| = 1$. By Proposition ~\ref{prop23}, $o(X)\leq 2$, for every $X\in H$. Then $H\cong K$, and the result follows by Proposition \ref{pa33}.
\end{proof}

\begin{corollary}
\label{c32} Consider $M$ as a finite abelian group of even order and exponent greater than $2$ and write $M = C_{2i_1}\times C_{2i_2}\times... \times C_{2i_s} \times M_1$, where $|M_1|$ is an abelian group of odd order, $s\geq 1$ and $i_j\geq 1$. Then $|\mathcal{H}_M| = 4^s$.
\end{corollary}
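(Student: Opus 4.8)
The plan is to count $\mathcal{H}_M$ via the trichotomy provided by Proposition \ref{pa33}, whose three cases (i), (ii), (iii) are mutually exclusive, and then to add the three subtotals. The one arithmetic ingredient I need beforehand is the number of involutions of $M$. Writing $T=\{x\in M : x^2=1\}$ for the $2$-torsion subgroup (a subgroup since $M$ is abelian), the stated decomposition gives $T\cong(\mathbb{Z}_2)^s$, because each even cyclic factor $C_{2i_j}$ contributes a single $\mathbb{Z}_2$ summand to $T$ while the odd part $M_1$ contributes none. Hence $|T|=2^s$, and $M$ has exactly $2^s-1$ elements of order $2$.

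Case (i) contributes the single subloop $(K,1)$. For case (ii), every such subgroup has the form $\{(1,1),(A,x),(B,x),(C,1)\}$ with $\{A,B,C\}=\{a,b,c\}$ and $o(x)=2$; I would parametrize it by the pair consisting of the Klein element $C$ whose $M$-component is trivial and the involution $x$ paired with the remaining two Klein elements. As the subgroup recovers $C$ (the unique non-identity element with trivial $M$-component) and $x$ uniquely, this is a bijection onto $\{a,b,c\}\times\{x\in M : o(x)=2\}$, so case (ii) contributes $3(2^s-1)$.

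For case (iii), I would describe a subgroup $\{(1,1),(A,x),(B,y),(C,xy)\}$ by the triple $(x_a,x_b,x_c)$ of $M$-components attached to $a,b,c$. The closure identity $(a,x_a)*(b,x_b)=(c,x_a^{-1}x_b)=(c,x_ax_b)$ forces $x_c=x_ax_b$, so the subgroup is determined by the pair $(x_a,x_b)$ alone; conversely any two \emph{distinct} involutions $x_a,x_b$ yield a genuine such subgroup, since $x_c=x_ax_b$ is then automatically an involution distinct from $1$, $x_a$ and $x_b$. This sets up a bijection with the ordered pairs of distinct involutions of $M$, giving $(2^s-1)(2^s-2)$ subgroups. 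Writing $n=2^s$ and assembling the three subtotals,
\[
|\mathcal{H}_M| = 1 + 3(n-1) + (n-1)(n-2) = 1 + (n-1)(n+1) = n^2 = 4^s.
\]

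The step I expect to require the most care is the bookkeeping in cases (ii) and (iii): I must confirm that each chosen description of a subgroup is a genuine bijection, so that no subgroup is counted twice and none is missed. In particular, for case (iii) the key point is that the forced third component $x_ax_b$ never collides with $x_a$, $x_b$ or $1$, which is precisely where the hypothesis that $x_a$ and $x_b$ are distinct non-trivial involutions is used. Everything else reduces to the closure computations already implicit in the derivation of Proposition \ref{pa33} and to the elementary determination of $T$.
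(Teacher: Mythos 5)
Your proof is correct and takes essentially the same approach as the paper: both count $\mathcal{H}_M$ through the trichotomy of Proposition \ref{pa33}, using that $M$ has exactly $2^s-1$ involutions, and compute $1+3(2^s-1)+(2^s-1)(2^s-2)=4^s$. The only cosmetic difference is in case (iii), where you biject the subgroups directly with ordered pairs of distinct involutions, while the paper counts unordered Klein four-subgroups of $M$ and multiplies by the $6$ possible labelings; the two bookkeepings are equivalent.
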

\begin{proof} Each $C_{2i_j}$ has only one element of order $2$. Denote such element by $x_j$. Consider $n_1$, $n_2$ and $n_3$ as the numbers of elements in $\mathcal{H}_M$ of the types (i), (ii) and (iii) according to Proposition \ref{pa32}, respectively. Then $|\mathcal{H}_M| = n_1 + n_2 + n_3$ and $n_1 = 1$. 

We have that $n_2=3\cdot n_2'$, where $n_2'$ is the number of elements of order $2$ in $M$. Each such element has the form $x_{l_1}x_{l_2}...x_{l_r}$, where $r\leq s$ and $1\leq l_1<l_2<...<l_r\leq s$. Then $n_2' = 2^s - 1$ and we get $n_2 = 3\cdot (2^s - 1)$.

The number $n_3$ is equal to $6\cdot n_3'$, where $n_3'$ is the number of subsets of $M$ isomorphic to $K$. It is not difficult to see that $n_3' = \frac{1}{3}\cdot \frac{n_2'!}{2!(n_2'-2)!} = \frac{(2^s - 1)(2^s - 2)}{6}$. Then $n_3 = (2^s - 1)(2^s - 2)$ and we have

\begin{center}
$|\mathcal{H}_M| = n_1 + n_2 + n_3 = 1 + 3\cdot (2^s - 1) + (2^s - 1)(2^s - 2) = 4^s$.
\end{center}
\end{proof}

\begin{corollary}
\label{c33} Let $f$ be a half-automorphism of $L_M$. Then there exist a unique $f'\in Aut(K)$ and $x,y\in M$ such that $o(x),o(y)\leq 2$ and

\begin{center}
$f(A,1) = (f'(A),\alpha_{(x,y)}(A))$, for every $A\in K$,
\end{center}
where $\alpha_{(x,y)}(1) = 1$, $\alpha_{(x,y)}(a) = x$, $\alpha_{(x,y)}(b) = y$ and $\alpha_{(x,y)}(c) = xy$.
\end{corollary}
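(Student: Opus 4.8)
The plan is to write $f(A,1)=(\phi(A),g(A))$, where $\phi\colon K\to K$ and $g\colon K\to M$ are the compositions of the restriction $f|_{(K,1)}$ with the two coordinate projections, and then to identify $\phi$ with the desired automorphism $f'$ and $g$ with $\alpha_{(x,y)}$. Since $f$ preserves orders by Proposition~\ref{prop12} and $(1,1)$ is the unique element of order $1$, I first note that $f(1,1)=(1,1)$, so $\phi(1)=1$ and $g(1)=1$. By Proposition~\ref{pa32}, $H:=f(K,1)\in\mathcal{H}_M$, and Proposition~\ref{pa33} describes $H$ completely: its three nonidentity elements have pairwise distinct $K$-coordinates forming $\{a,b,c\}$, and each of their $M$-coordinates has order at most $2$. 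In particular $\phi$ is a bijection of $K$ fixing $1$ and permuting $\{a,b,c\}$, and $g(a),g(b),g(c)$ all have order $\le 2$ in $M$.

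Because $\mathrm{Aut}(K)\cong S_3$ realizes every permutation of $\{a,b,c\}$, any such $\phi$ is automatically an automorphism, so I set $f':=\phi$. The substance of the argument is the multiplicative behaviour of $g$, which I would extract from the half-automorphism condition together with the commutativity of $H$. Since $(a,1)(b,1)=(c,1)$ and $H\cong K$ is commutative, the two elements of $\{f(a,1)f(b,1),\,f(b,1)f(a,1)\}$ coincide, forcing $f(c,1)=f(a,1)f(b,1)$. Evaluating the right-hand side with the rule $(A,u)(B,v)=(AB,u^{-1}v)$ for $A,B\neq 1$ gives $f(a,1)f(b,1)=(\phi(a)\phi(b),\,g(a)^{-1}g(b))$, and comparing coordinates with $f(c,1)=(\phi(c),g(c))$ yields $\phi(a)\phi(b)=\phi(c)$ and $g(c)=g(a)^{-1}g(b)$.

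The key simplification is that $o(g(a))\le 2$ gives $g(a)^{-1}=g(a)$, whence $g(c)=g(a)g(b)$. Setting $x:=g(a)$ and $y:=g(b)$, I then have $o(x),o(y)\le 2$ together with $g(1)=1$, $g(a)=x$, $g(b)=y$, $g(c)=xy$, that is $g=\alpha_{(x,y)}$, which is exactly $f(A,1)=(f'(A),\alpha_{(x,y)}(A))$. Uniqueness is immediate, since $f'(A)$ is the $K$-coordinate of $f(A,1)$ while $x$ and $y$ are the $M$-coordinates of $f(a,1)$ and $f(b,1)$, so all three are determined by $f$. I expect the main obstacle to be precisely the passage from $g(c)=g(a)^{-1}g(b)$ to $g(c)=g(a)g(b)$, which relies essentially on the order-$\le 2$ information coming from Proposition~\ref{pa33}; without it the $M$-coordinate would not be symmetric in $a$ and $b$ and the clean form $\alpha_{(x,y)}$ would fail.
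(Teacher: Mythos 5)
Your proof is correct, and it follows the same skeleton as the paper's (which is a one-line appeal to Propositions \ref{pa32} and \ref{pa33} together with $\mathrm{Aut}(K)\cong S_3$), but it handles the crucial step differently. The paper implicitly reads the coherence relation --- that the $M$-coordinate of $f(c,1)$ is the product of the $M$-coordinates of $f(a,1)$ and $f(b,1)$ --- directly off the explicit three-case classification in Proposition \ref{pa33}, checking that each listed subgroup $\{(1,1),(A,x),(B,y),(C,xy)\}$ already has the $\alpha_{(x,y)}$ shape no matter how $f$ permutes its elements. You instead extract only the weaker facts from Proposition \ref{pa33} (distinct $K$-coordinates forming $\{a,b,c\}$, and $M$-coordinates of order at most $2$) and derive the coherence uniformly: since $H=f(K,1)\cong K$ is commutative, the half-automorphism condition collapses to the single equality $f(c,1)=f(a,1)f(b,1)$, which you then evaluate with the multiplication rule $(A,u)(B,v)=(AB,u^{-1}v)$ and simplify using $g(a)^{-1}=g(a)$. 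This buys a case-free argument that makes transparent exactly where the order-$\leq 2$ hypothesis enters (turning $g(a)^{-1}g(b)$ into $g(a)g(b)$), at the cost of invoking the half-automorphism property once more; the paper's route is shorter given that Proposition \ref{pa33} has already done the combinatorial work. Both are complete and correct, including the uniqueness argument, which in each case is immediate from reading off coordinates.
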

\begin{proof} The existence of $f'\in Aut(K)$ and $x,y\in M$ satisfying  the conditions of the claim is a consequence of Propositions \ref{pa32} and \ref{pa33} and the fact that $Aut(K)$ is the symmetric group $S_3$. The unicity is trivial.
\end{proof}
\begin{remark}
\label{obs1} In the conditions of Corollary \ref{c33}, the mapping $\alpha_{(x,y)}: K \to M$ is a homomorphism. Furthermore, from Proposition \ref{prop24} it follows that $(1,\alpha_{(x,y)}(A)) \in \mathcal{Z}(L_M)$, for every $A\in K$.
\end{remark}

\begin{proposition}
\label{autM} Let $f$ be a half-autmorphism of $L_M$. For every $x\in M$, consider $f''(x)\in M$ as $(1,f''(x))=f(1,x)$. Then $f'':M\to M$ is an automorphism of $M$.
\end{proposition}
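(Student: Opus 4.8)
The plan is to exploit the fact that $f$ restricts to a half-automorphism of the \emph{commutative} group $(1,M)$, and that on a commutative structure a half-automorphism is forced to be an automorphism, since the two branches of the defining condition collapse.

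First I would check that $f''$ is well defined and bijective. By Proposition \ref{p31} we have $f(1,M) = (1,M)$, so for each $x \in M$ there is a unique element $f''(x) \in M$ with $f(1,x) = (1, f''(x))$; this is exactly the definition given in the statement. Since $f$ is a bijection of $L_M$ carrying $(1,M)$ onto itself, its restriction to $(1,M)$ is a bijection of $(1,M)$, and therefore $f'' : M \to M$ is a bijection.

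Next I would verify that $f''$ is a homomorphism. Fix $x, y \in M$. From the multiplication rules, $(1,x)(1,y) = (1,xy)$, so applying the half-automorphism condition to the pair $(1,x),(1,y)$ yields
\begin{equation*}
(1, f''(xy)) = f(1,xy) \in \bigl\{\, f(1,x)f(1,y),\ f(1,y)f(1,x)\,\bigr\} = \bigl\{\,(1, f''(x)f''(y)),\ (1, f''(y)f''(x))\,\bigr\}.
\end{equation*}
Because $M$ is abelian we have $f''(x)f''(y) = f''(y)f''(x)$, so the two candidate values coincide and we conclude $f''(xy) = f''(x)f''(y)$. Hence $f''$ is a bijective homomorphism, that is, an automorphism of $M$.

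I expect essentially no obstacle in this argument: its entire content is the observation that the commutativity of $M$ merges the two options of the half-automorphism relation into a single equality, which is precisely what upgrades the a priori \emph{half} condition into a genuine homomorphism. The only external input is Proposition \ref{p31}, already established, which guarantees both the well-definedness of $f''$ and that it maps $M$ into $M$.
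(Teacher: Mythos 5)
Your proposal is correct and follows essentially the same route as the paper: both use Proposition \ref{p31} for well-definedness of $f''$ and then observe that commutativity of $M$ collapses the two-element set $\{f(1,x)f(1,y),\, f(1,y)f(1,x)\}$ to a single value, forcing $f''(xy)=f''(x)f''(y)$. Your explicit remark on bijectivity of $f''$ is a minor addition the paper leaves implicit, not a different argument.
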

\begin{proof}
First, observe that By Proposition \ref{p31}, $f''$ is well-defined. Now, for every $x$ and $y$ in $M$, we have 

\begin{center}
$(1,f''(xy))=f(1,xy)=f((1,x)(1,y))\in\{f(1,x)f(1,y), f(1,y)f(1,x)\}$.
\end{center}
Note that $\{f(1,x)f(1,y), f(1,y)f(1,x)\} = \{(1,f''(x)f''(y))\}$. Then $f''(xy)=f''(x)f''(y)$. 
\end{proof}

Every element $(A,x)$ in $L_M$ can be written as $(A,1)(1,x)$. By Corollary \ref{c33}, there are $f'\in Aut(K)$ and $u,v\in M$ such that $f(A,1) = (f'(A),\alpha_{(u,v)}(A))$, where $o(u),o(v)\leq 2$.
For $A\neq 1$, 

\begin{center}
$f(A,x)=f((A,1)(1,x))\in\{(f^{'}(A),1)(1,f''(x)),(1,f''(x))(f'(A),1)\}$,
\end{center}
and so

\begin{center}
$f(A,x) \in \{(f'(A),f''(x)\alpha_{(u,v)}(A)),(f'(A),f''(x^{-1})\alpha_{(u,v)}(A))\}$.
\end{center}
Then if $f'\in Aut(K)$, $f''\in Aut(M)$ and $u,v\in M$ are elements such that $o(u),o(v)\leq 2$, we define $F_{(f',f'',u,v)}^+,F_{(f',f'',u,v)}^-:L_M\to L_M$ by $$F_{(f',f'',u,v)}^+(A,x)=(f'(A),f''(x)\alpha_{(u,v)}(A))\quad\mbox{and}$$ $$F_{(f',f'',u,v)}^-(A,x)=\left\{\begin{array}{rc}
(f'(A),f''(x)\alpha_{(u,v)}(A)),&\mbox{if}\quad A= 1,\\
(f'(A),f''(x^{-1})\alpha_{(u,v)}(A)), &\mbox{otherwise}.
\end{array}\right.
$$
For making the notation easier, we will write $f_{(u,v)}^+$ and $f_{(u,v)}^-$ instead of $F_{(f',f'',u,v)}^+$ and $F_{(f',f'',u,v)}^-$, respectively. By Corollary \ref{c31}, $f_{(u,v)}^+$ and $f_{(u,v)}^-$ are bijections. Furthermore, for every $A\in K$ and $x\in M$,
\begin{equation}
\label{e31}
f_{(u,v)}^+(A,x)\!=\! (1,\alpha_{(u,v)}(A))\!*\!f_{(1,1)}^+(A,x) \textrm{ and } f_{(u,v)}^-(A,x)\!=\! (1,\alpha_{(u,v)}(A))\!*\!f_{(1,1)}^-(A,x).
\end{equation}
\begin{proposition}
\label{p34} In the conditions above, $f_{(u,v)}^+$ is an automorphism and $f_{(u,v)}^-$ is a proper half-automorphism of $L_M$.
\end{proposition}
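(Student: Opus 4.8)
The plan is to reduce the whole statement to the special case $(u,v)=(1,1)$, using identity \eqref{e31} together with the fact, recorded in Remark~\ref{obs1}, that $(1,\alpha_{(u,v)}(A))$ lies in $\mathcal{Z}(L_M)$. Writing $z_A=(1,\alpha_{(u,v)}(A))$, identity \eqref{e31} reads $f^{+}_{(u,v)}(A,x)=z_A*f^{+}_{(1,1)}(A,x)$ and $f^{-}_{(u,v)}(A,x)=z_A*f^{-}_{(1,1)}(A,x)$. Each $z_A$ is central, hence associates and commutes with every element of $L_M$; moreover $z_A*z_B=(1,\alpha_{(u,v)}(A)\alpha_{(u,v)}(B))=(1,\alpha_{(u,v)}(AB))=z_{AB}$ because $\alpha_{(u,v)}$ is a homomorphism. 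These two properties are exactly what is needed to transfer multiplicativity from the base case to the general case.

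First I would settle the base case. When $u=v=1$ the homomorphism $\alpha_{(1,1)}$ is trivial, so $f^{+}_{(1,1)}(A,x)=(f'(A),f''(x))$ acts coordinatewise, and I would verify directly that it respects each of the four multiplication rules of $L_M$: the point is that $f'\in\aut K$ fixes the condition ``$A=1$'' while $f''\in\aut M$ converts the twisted factor $x^{-1}y$ into $f''(x)^{-1}f''(y)$. Hence $f^{+}_{(1,1)}$ is an automorphism. For $f^{-}_{(1,1)}$, which sends $(A,x)$ to $(f'(A),f''(x)^{-1})$ when $A\neq1$ and to $(1,f''(x))$ when $A=1$, I would run through the same four cases and check in each that $f^{-}_{(1,1)}((A,x)*(B,y))$ equals $f^{-}_{(1,1)}(A,x)*f^{-}_{(1,1)}(B,y)$ or $f^{-}_{(1,1)}(B,y)*f^{-}_{(1,1)}(A,x)$; here the commutativity of $M$ is what lets the reversed product absorb the extra inversions.

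With the base case in hand the reduction is immediate. For $f^{+}_{(u,v)}$ I would compute $f^{+}_{(u,v)}((A,x)*(B,y))=z_{AB}*f^{+}_{(1,1)}((A,x)*(B,y))=z_{AB}*f^{+}_{(1,1)}(A,x)*f^{+}_{(1,1)}(B,y)$, then use $z_{AB}=z_A*z_B$ and centrality to regroup this as $(z_A*f^{+}_{(1,1)}(A,x))*(z_B*f^{+}_{(1,1)}(B,y))=f^{+}_{(u,v)}(A,x)*f^{+}_{(u,v)}(B,y)$; together with bijectivity (already noted via Corollary~\ref{c31}), this shows $f^{+}_{(u,v)}$ is an automorphism. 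The identical manipulation applied to $f^{-}_{(u,v)}$ carries the two-valued conclusion of the base case through unchanged, so $f^{-}_{(u,v)}$ is a half-automorphism. To see it is \emph{proper}, recall that by Corollary~\ref{cor22} the loop $L_M$ has no anti-automorphism, so it suffices to exhibit a single failure of multiplicativity: taking $A\neq1$ and any $y\in M$ with $o(y)>2$ (available since $\exp(M)>2$), the images $f^{-}_{(u,v)}((A,x)*(1,y))$ and $f^{-}_{(u,v)}(A,x)*f^{-}_{(u,v)}(1,y)$ differ precisely because $f''(y)^{-1}\neq f''(y)$.

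The main obstacle is the base-case verification for $f^{-}_{(1,1)}$: one must check, case by case and tracking the inversions carefully, that the image of the product always coincides with the forward product or the reversed one, and identify which of the two it is in each case. Once the abelianness of $M$ has been used to reconcile the reversed products with the bookkeeping of inverses, everything else is formal, being driven entirely by the centrality of the $z_A$ and the homomorphism property of $\alpha_{(u,v)}$.
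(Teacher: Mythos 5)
Your proposal is correct and follows essentially the same route as the paper: reduce to the case $(u,v)=(1,1)$ via \eqref{e31}, the homomorphism property of $\alpha_{(u,v)}$ and the centrality from Remark \ref{obs1}, then verify the base case by direct case analysis of the multiplication rules, and obtain properness from an element of order greater than $2$ (your use of Corollary \ref{cor22} to rule out anti-automorphisms is a harmless repackaging of the paper's observation that the forward and reversed products differ when $w\neq w^{-1}$).
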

\begin{proof}
Since $\alpha_{(u,v)}$ is a homomorphism and $(1,\alpha_{(u,v)}(A))\in \mathcal{Z}(L_M)$, for every $A\in K$ (Remark \ref{obs1}), we only have to prove that $f_{(1,1)}^+$ is an automorphism and $f_{(1,1)}^-$ is a proper half-automorphism by \eqref{e31}. Denote $f_{(1,1)}^+$ and $f_{(1,1)}^-$ by $f^+$ and $f^-$, respectively. First, let us prove that $f^+$ is an automorphism. Let $(A,x),(B,y)\in L_M$. When $B=1$ we have  

\begin{center}
$f^+((A,x)(1,y))=f^+(A,xy)=(f'(A),f''(xy))=(f'(A),f''(x))(1,f''(y))=f^+(A,x)f^+(1,y)$. 
\end{center}

\noindent{}If $B\neq 1$, then 

\begin{center}
$f^+((A,x)(B,y))= f^+(AB,x^{-1}y)= (f'(AB),f''(x^{-1}y))=(f'(A)f'(B),f''(x)^{-1}f''(y))=f^+(A,x)f^+(B,y)$.
\end{center} 
 
\noindent{}Now, let us prove that $f^-$ is a proper half-automorphism. Clearly, $f^-((1,x)(1,y))=f^-(1,x)f^-(1,y)$ for every $x$, $y\in M$. A straightfoward calculation shows us that, for $B\neq 1$, we have $f^-((1,x)(B,y))=f^-(B,y)f^-(1,x)$. Also, for $A\neq 1$, we get $f^-((A,x)(1,y))=f^-(1,y)f^-(A,x)$ and $f^-((A,x)(A,y))=f^-(A,y)f^-(A,x)$. Finally, for $A\neq B$, and both not equal to $1$, we have
 
\begin{center}
$f^-((A,x)(B,y))=f^-(AB,x^{-1}y)=(f'(AB),f''(xy^{-1})) =f^-(A,x)f^-(B,y)$. 
\end{center}

\noindent{}On other hand, we can note that $f^-(B,y)f^-(A,x)=(f'(AB),f''(y)f''(x^{-1}))$
and, since there exists $w\in M$ such that $w\not = w^{-1}$, we conclude that $f^-$ is a proper half-automorphism of $L_M$.
\end{proof}

\begin{proposition}
\label{auto}
Let $g$ be an automorphism of $L_M$. Then $g=g^{+}_{(u,v)}$.
\end{proposition}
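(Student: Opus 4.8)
The plan is to exploit the fact that an automorphism is multiplicative on the nose, which removes the sign ambiguity inherent in a general half-automorphism. Since $g$ is in particular a half-automorphism, Corollary \ref{c33} yields a unique $f'\in\mathrm{Aut}(K)$ together with $u,v\in M$ satisfying $o(u),o(v)\leq 2$ such that $g(A,1)=(f'(A),\alpha_{(u,v)}(A))$ for every $A\in K$, and Proposition \ref{autM} provides the automorphism $f''\in\mathrm{Aut}(M)$ determined by $(1,f''(x))=g(1,x)$. These four pieces of data are precisely those defining the map $F_{(f',f'',u,v)}^+=f_{(u,v)}^+$, so it suffices to verify that $g$ and $f_{(u,v)}^+$ agree on every element of $L_M$.

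On the subgroup $(1,M)$ the two maps coincide trivially, since $g(1,x)=(1,f''(x))=f_{(u,v)}^+(1,x)$. For $A\neq 1$ I would write $(A,x)=(A,1)*(1,x)$ and use that $g$ is an automorphism to get $g(A,x)=g(A,1)*g(1,x)=(f'(A),\alpha_{(u,v)}(A))*(1,f''(x))$. As $f'(A)\neq 1$, the multiplication rule $(A',s)*(1,t)=(A',st)$ gives $(f'(A),\alpha_{(u,v)}(A)f''(x))$, and since $M$ is abelian this equals $(f'(A),f''(x)\alpha_{(u,v)}(A))=f_{(u,v)}^+(A,x)$. Hence $g=f_{(u,v)}^+$, as claimed.

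The step that deserves attention — rather than a real obstacle — is recognizing why the equality (and not merely membership in a two-element set) holds for $A\neq 1$. For an arbitrary half-automorphism the argument preceding Proposition \ref{p34} only shows $g(A,x)\in\{(f'(A),f''(x)\alpha_{(u,v)}(A)),(f'(A),f''(x^{-1})\alpha_{(u,v)}(A))\}$, the two candidates being distinct exactly when $o(x)>2$. What forces the first candidate here is that $g$, being a genuine automorphism, preserves the order of the factors in $g(A,1)*g(1,x)$; this is the precise feature distinguishing automorphisms from the proper half-automorphisms $f_{(u,v)}^-$. The computation itself reduces to the single product $(f'(A),\alpha_{(u,v)}(A))*(1,f''(x))$, so no further difficulty arises.
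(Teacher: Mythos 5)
Your proof is correct, and it is genuinely more direct than the one in the paper. The paper argues by contradiction: assuming $g(A,y)=g^{-}_{(u,v)}(A,y)$ at some point with $A\neq 1$ and $o(y)>2$, it multiplies by elements $(A,x)$ of the same coset, splits into cases according to whether $g(A,x)$ equals $g^{+}_{(u,v)}(A,x)$ or $g^{-}_{(u,v)}(A,x)$, and derives either $g''(y)=g''(y^{-1})$ or $o(y^{-1}x)\leq 2$ for all $x$, contradicting $o(y)>2$ and $\exp(M)>2$ respectively. You instead observe that exact multiplicativity removes the pointwise ambiguity at its source: writing $(A,x)=(A,1)*(1,x)$ and using $g(XY)=g(X)g(Y)$ (not merely membership in $\{g(X)g(Y),g(Y)g(X)\}$), the single product $(f'(A),\alpha_{(u,v)}(A))*(1,f''(x))=(f'(A),\alpha_{(u,v)}(A)f''(x))$ identifies $g$ with $g^{+}_{(u,v)}$ everywhere, with no case analysis and no appeal to the exponent hypothesis beyond what is already embedded in Corollary \ref{c33} and Proposition \ref{autM}. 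Your approach buys brevity and makes transparent \emph{why} automorphisms correspond to the $+$ sign: the fixed order of factors in $g(A,1)*g(1,x)$ is exactly what $F^{+}$ encodes. What the paper's heavier argument buys is uniformity: the same contradiction template is reused almost verbatim in Proposition \ref{p36} for proper half-automorphisms, where your direct computation is unavailable because a proper half-automorphism may swap the order of factors point by point.
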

\begin{proof}
First, observe that $g^{+}_{(u,v)}(1,y)=g^{-}_{(u,v)}(1,y)$ for every $y\in M$ and, if $o(y)\leq2$,  $g^{+}_{(u,v)}(A,y)=g^{-}_{(u,v)}(A,y)$ for every $A\in K$. Then suppose, towards a contradiction, that $g(A,y)=g^{-}_{(u,v)}(A,y)$ for some $A\in K-\{1\}$ and $y\in M$ with $o(y)>2$. For every $x\in M$, $g((A,y)(A,x))=g(A,y)g(A,x)$ and $g((A,y)(A,x))=g(1,y^{-1}x)=(1,g''(y^{-1}x))$. If $g(A,x)=g^{+}_{(u,v)}(A,x)$, then $g(A,y)g(A,x)=(1,g''(y)g''(x))$ which implies $g''(y)=g''(y^{-1})$, which is a contradiction since $o(y)>2$. On other hand, if $g(A,x)=g^{-}_{(u,v)}(A,x)$, then $g(A,y)g(A,x)=(1,g''(y)g''(x^{-1}))$, and so $o(y^{-1}x)\leq 2$ for every $x\in M$, which is a contradiction because $M$ has exponent greater than 2. Therefore, $g=g^{+}_{(u,v)}$.
\end{proof}

\begin{proposition}
\label{p36}
Let $g$ be a proper half-automorphism of $L_M$. Then $g=g^{-}_{(u,v)}$.
\end{proposition}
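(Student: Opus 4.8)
The plan is to pin down, for each $(A,x)$ with $A\neq1$ and $o(x)>2$, which of the two admissible formulas $g$ obeys, and then to show these choices are all the ``minus'' one. First I would collect what the preceding results already yield for a half-automorphism $g$: by Proposition \ref{autM} there is $g''\in\aut M$ with $g(1,x)=(1,g''(x))$; by Corollary \ref{c33} there are a unique $g'\in\aut K$ and $u,v\in M$ of order at most $2$ with $g(A,1)=(g'(A),\alpha_{(u,v)}(A))$; and by the computation preceding Proposition \ref{p34}, for every $A\neq1$ and $x\in M$ one has $g(A,x)\in\{g^{+}_{(u,v)}(A,x),g^{-}_{(u,v)}(A,x)\}$. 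Since $g^{+}_{(u,v)}$ and $g^{-}_{(u,v)}$ coincide on $(1,M)$ and on every $(A,x)$ with $o(x)\le2$, the only undetermined values are those $g(A,x)$ with $A\neq1$ and $o(x)>2$; I will say that $g$ makes a \emph{plus} or a \emph{minus} choice at such a pair according to which formula it follows (recall $g''$ preserves orders, so the two formulas genuinely differ there).

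Because $g$ is proper it is not an automorphism, whereas $g^{+}_{(u,v)}$ is one by Proposition \ref{p34}; hence $g\neq g^{+}_{(u,v)}$, so $g$ makes a minus choice at some pair $(A_0,x_0)$. Everything then reduces to the claim that $g$ cannot make a plus choice and a minus choice simultaneously, i.e.\ there are no $A,B\in K\setminus\{1\}$ and $s,t\in M$ with $o(s),o(t)>2$, $g(A,s)=g^{+}_{(u,v)}(A,s)$ and $g(B,t)=g^{-}_{(u,v)}(B,t)$. Granting this, the existence of a minus choice forces every genuine choice to be minus, so $g$ agrees with $g^{-}_{(u,v)}$ on the undetermined locus and hence everywhere, giving $g=g^{-}_{(u,v)}$.

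To establish the claim I would argue by contradiction, testing the half-automorphism condition on the single product $(A,s)*(B,t)$ and splitting on whether $A=B$. Put $P=g''(s)$ and $Q=g''(t)$, so $o(P),o(Q)>2$. If $A=B$, then $(A,s)*(A,t)=(1,s^{-1}t)$ has image $(1,P^{-1}Q)$, while the two products $g(A,s)g(A,t)$ and $g(A,t)g(A,s)$ have $M$-parts $P^{-1}Q^{-1}$ and $QP$ (the central factors $\alpha_{(u,v)}(A)$ cancel, being of order at most $2$ by Remark \ref{obs1}); matching $P^{-1}Q$ against either one forces $P^{2}=1$ or $Q^{2}=1$, that is $o(s)\le2$ or $o(t)\le2$, a contradiction. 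If $A\neq B$, set $C=AB$; then $(A,s)*(B,t)=(C,s^{-1}t)$, whose image is $g^{\pm}_{(u,v)}(C,s^{-1}t)$ with $M$-part $(P^{-1}Q)^{\pm1}\alpha_{(u,v)}(C)$, while $g(A,s)g(B,t)$ and $g(B,t)g(A,s)$ have $M$-parts $P^{-1}Q^{-1}\alpha_{(u,v)}(C)$ and $QP\,\alpha_{(u,v)}(C)$. After cancelling $\alpha_{(u,v)}(C)$, matching $(P^{-1}Q)^{\pm1}$ against $\{P^{-1}Q^{-1},QP\}$, for both signs, again yields $P^{2}=1$ or $Q^{2}=1$, the required contradiction.

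The main obstacle is the disjunction intrinsic to the half-automorphism condition: unlike the automorphism case of Proposition \ref{auto}, $g(XY)$ may equal either $g(X)g(Y)$ or $g(Y)g(X)$, and in the case $A\neq B$ the image at $(C,s^{-1}t)$ carries its own unknown sign as well. The point to verify carefully is therefore that \emph{every} admissible matching collapses to $P^{2}=1$ or $Q^{2}=1$; having chosen $s,t$ of order greater than $2$, each such outcome is impossible, which is what drives all the cases to a contradiction. A routine bookkeeping point, used throughout, is that the correction terms $\alpha_{(u,v)}(A)$ are central and of order at most $2$, so they cancel uniformly and reduce each comparison to an identity between $P$ and $Q$ alone.
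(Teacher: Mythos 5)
Your proposal is correct and takes essentially the same approach as the paper's proof: both use properness (via Propositions \ref{p34} and \ref{auto}) to produce a ``minus'' value and then rule out the coexistence of a plus choice at $(A,s)$ and a minus choice at $(B,t)$ by applying the half-automorphism condition to the product $(A,s)*(B,t)$, split into the cases of equal and distinct $K$-components. Your matchings forcing $P^2=1$ or $Q^2=1$ (i.e.\ $o(s)\le 2$ or $o(t)\le 2$) are exactly the paper's contradictions $x=x^{-1}$ or $z=z^{-1}$, with the central factors $\alpha_{(u,v)}(\cdot)$ cancelling just as in the paper.
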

\begin{proof}
For every $(A,x)\in L_M$, $g(A,x)\in\{g^{+}_{(u,v)}(A,x),g^{-}_{(u,v)}(A,x)\}$ and, by Proposition ~\ref{auto}, $g\neq g^{+}_{(u,v)}$. Also, $g^{+}_{(u,v)}(A,x)=g^{-}_{(u,v)}(A,x)$ whenever either $A= 1$ or $o(x)\leq 2$. Thus suppose, towards a contradiction, that $g(A,x)= g^{+}_{(u,v)}(A,x)$ for some $A\in K-\{1\}$ and $x\in M$ with $o(x)>2$. Since $g$ is not an automorphim of $L_M$, there is $(\epsilon,\theta)\in L_M$ such that $g(\epsilon,\theta)=g^{-}_{(u,v)}(\epsilon,\theta)\neq g^{+}_{(u,v)}(\epsilon,\theta)$ and then $\epsilon\neq 1$ and $o(\theta)>2$. First, suppose $\epsilon=A$ and set $\theta=z$, that is, $g(A,z)=(g'(A),g''(z^{-1})\alpha(u,v)(A))$. Since 

\begin{center}
$g((A,x)(A,z))=g(1,x^{-1}z)=(1,g''(x^{-1}z))\in \{g(A,x)g(A,z),g(A,z)g(A,x)\}$,
\end{center}

\noindent{}$g(A,x)g(A,z)=(1,g''(x^{-1}z^{-1}))$ and $g(A,z)g(A,x)=(1,g''(zx))$, we get either $x$ or $z$ with order less then or equal to $2$, which is a contradiction. Now, suppose $\epsilon=AB$, with $B\in K-\{1,A\}$ and set $\theta=z$. We have 

\begin{center}
$(A,x)(AB,z)=(B,x^{-1}z)\in\{g^{+}_{(u,v)}(B,x^{-1}z),g^{-}_{(u,v)}(B,x^{-1}z)\}$ 
\end{center}

\noindent{}and $g((A,x)(AB,z))\in\{g(A,x)g(AB,z),g(AB,z)g(A,x)\}$. First, if $g(B,x^{-1}z)=g^{+}_{(u,v)}(B,x^{-1}z)$, then we have either $g(A,x)g(AB,z)=(g'(B),g''(x^{-1}z^{-1})\alpha(u,v)(B))$ and $z=z^{-1}$ or 
$g(AB,z)g(A,x)\!\!=(g'(B),g''(zx)\alpha(u,v)(B))$ and $x=x^{-1}$ and, in both  cases, we get a contradiction. Finally, if $g(B,x^{-1}z)=g^{-}_{(u,v)}(B,x^{-1}z)$, in the same way as before, we get either $x=x^{-1}$ or $z=z^{-1}$ and so, a contradiction. Hence, $g=g^{-}_{(u,v)}$ as desired.
\end{proof}
\begin{theorem}
\label{t31} Let $L_M$ be the Bol loop constructed in the Section \ref{bol} and let $Half(L_M)$ be the group of half-automorphisms of $L_M$. Then $Half(L_M)$ is the set

\begin{center}
$\{F_{(f',f'',u,v)}^+,F_{(f',f'',u,v)}^-\,|\,f'\in Aut(K), f''\in Aut(M), u,v\in M,o(u),o(v)\leq 2\}$.
\end{center}
Furthermore:

(a) If $|M|$ is odd, then $|Half(L_M)| = 2.|Aut(K)|.|Aut(M)|$.

(b) If $|M|$ is even, then $|Half(L_M)| = 2^{2s+1}.|Aut(K)|.|Aut(M)|$, where the abelian group $M$ is the direct product $C_{2i_1}\times C_{2i_2}\times... \times C_{2i_s} \times M_1$, $|M_1|$ has odd order, $s\geq 1$ and $i_j\geq 1$, for all $j$, and $C_{n}$ denotes the cyclic group of order $n$.
\end{theorem}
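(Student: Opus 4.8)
The plan is to split the argument into a set-theoretic identification of $\mathrm{Half}(L_M)$ followed by a counting argument. For the identification, I would first invoke Corollary~\ref{cor22}: since $L_M$ has no anti-automorphisms, every half-automorphism of $L_M$ is either an automorphism or a proper half-automorphism. Proposition~\ref{auto} shows that each automorphism has the form $F^+_{(f',f'',u,v)}$, Proposition~\ref{p36} shows that each proper half-automorphism has the form $F^-_{(f',f'',u,v)}$, and Proposition~\ref{p34} shows conversely that all such maps, for $f'\in \mathrm{Aut}(K)$, $f''\in\mathrm{Aut}(M)$ and $u,v\in M$ with $o(u),o(v)\leq 2$, are genuine half-automorphisms. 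Assembling these three propositions yields the displayed set equality.

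For the cardinalities, I would show that the parametrization $(f',f'',u,v,\varepsilon)$, where $\varepsilon\in\{+,-\}$ records the sign, is faithful, so that the count reduces to multiplying the numbers of admissible choices. The two signs produce disjoint families because, by Proposition~\ref{p34} (and using $\exp(M)>2$), every $+$ map is an automorphism while every $-$ map is a proper half-automorphism. Within a fixed sign I would reconstruct the parameters from the map itself: restricting $F^{\pm}_{(f',f'',u,v)}$ to $(1,M)$ returns $(1,f''(x))$ and hence determines $f''$, whereas evaluating at $(A,1)$ returns $(f'(A),\alpha_{(u,v)}(A))$ and hence determines both $f'$ and the homomorphism $\alpha_{(u,v)}$; since $\alpha_{(u,v)}(a)=u$ and $\alpha_{(u,v)}(b)=v$, this in turn pins down $(u,v)$. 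Thus distinct admissible tuples give distinct half-automorphisms, and
\[
|\mathrm{Half}(L_M)| = 2\cdot|\mathrm{Aut}(K)|\cdot|\mathrm{Aut}(M)|\cdot t^2,
\]
where $t=|\{x\in M : o(x)\leq 2\}|$ is the number of admissible values available for each of $u$ and $v$.

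It then remains to evaluate $t$, the order of the $2$-torsion subgroup of $M$. If $|M|$ is odd, the only element with $o(x)\leq 2$ is the identity, so $t=1$ and part~(a) follows immediately. If $|M|$ is even, writing $M=C_{2i_1}\times\cdots\times C_{2i_s}\times M_1$ with $M_1$ of odd order, each even cyclic factor $C_{2i_j}$ contributes exactly one element of order $2$ (hence two elements of order $\leq 2$) while $M_1$ contributes none, so $t=2^s$ and $t^2=2^{2s}$; substituting gives $|\mathrm{Half}(L_M)|=2^{2s+1}\cdot|\mathrm{Aut}(K)|\cdot|\mathrm{Aut}(M)|$, which is part~(b).

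I expect the only delicate point to be the faithfulness of the parametrization: verifying that no $+$ map coincides with a $-$ map and that $f'$, $f''$ and $(u,v)$ are each uniquely recoverable from the underlying function. The properness part of Proposition~\ref{p34} is exactly what prevents the two sign-families from overlapping, and once faithfulness is secured the assembly of the earlier propositions and the enumeration of $2$-torsion elements are routine.
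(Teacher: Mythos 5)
Your proposal is correct and follows essentially the same route as the paper: the set identification comes from Corollary~\ref{cor22} together with Propositions~\ref{p34}, \ref{auto} and \ref{p36}, and the cardinality count reduces to enumerating the elements of order at most $2$ in $M$ ($1$ in the odd case, $2^s$ in the even case). The only difference is that you explicitly verify faithfulness of the parametrization $(f',f'',u,v,\varepsilon)$ --- recovering $f''$ from the restriction to $(1,M)$, and $f'$, $(u,v)$ from the values at $(A,1)$, with the two sign families kept disjoint by Proposition~\ref{p34} --- a point the paper's count leaves implicit; this is a worthwhile detail but not a different approach.
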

\begin{proof} By Corollary \ref{cor22} and Propositions \ref{auto} and \ref{p36}, we have that $Half(L_M)$ is the set of all mappings $F_{(f',f'',u,v)}^+$ and $F_{(f',f'',u,v)}^-$. If $|M|$ is odd, then $M$ has no elements of order $2$ and we get (a). Now consider that $|M|$ is even and $M = C_{2i_1}\times C_{2i_2}\times... \times C_{2i_s} \times M_1$. In the proof of Corollary \ref{c32}, we saw that $M$ has $2^s - 1$ elements of order $2$. Then $|Half(L_M)| = 2.(2^s)^2.|Aut(K)|.|Aut(M)| =  2^{2s+1}.|Aut(K)|.|Aut(M)|$.
\end{proof}

\section{Half-automorphisms group of $L_M$}\label{haut}

In this section we present the structure of the half-automorphisms group of the loop $L_M$. Recall that if $f$ is a half-automorphism of $L_M$, then by Corollary 
\ref{c33} and Proposition \ref{autM} the mappings $f'$ and $f''$ automorphisms of $K$ and $M$, respectively.
\begin{lemma}
\label{l41} If $f,g$ are half-automorphisms of $L_M$, then
\begin{itemize}
    \item[(a)] $(gf)' = g'f'$ and
    \item[(b)] $(gf)'' = g''f''$.
\end{itemize}
\end{lemma}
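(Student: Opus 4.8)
The plan is to deduce both identities directly from the explicit description of half-automorphisms obtained just before the theorem, the entire argument resting on one observation: the $K$-component of any half-automorphism is insensitive both to its $M$-argument and to its sign. Before anything else I would note that $gf$ is again a half-automorphism, so that $(gf)'$ and $(gf)''$ are well defined by Corollary~\ref{c33} and Proposition~\ref{autM}. Indeed, if $f(XY)\in\{f(X)f(Y),f(Y)f(X)\}$, then $g(f(XY))$ equals $g$ applied to one of those two products, and since $g$ is itself a half-automorphism each of $g(f(X)f(Y))$ and $g(f(Y)f(X))$ lies in $\{(gf)(X)(gf)(Y),(gf)(Y)(gf)(X)\}$; hence $gf$ is a half-automorphism.

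Next I would record the key fact. By Propositions~\ref{auto} and~\ref{p36} every half-automorphism $h$ of $L_M$ is of type $F^{+}$ or $F^{-}$, and inspecting the two displayed formulas defining $F^{+}$ and $F^{-}$ shows that in every case the first coordinate of $h(A,x)$ equals $h'(A)$, for all $A\in K$ and all $x\in M$; neither the second coordinate $x$ nor the choice of sign affects the $K$-entry. With this in hand, part (a) is essentially immediate. By Corollary~\ref{c33}, $(gf)'(A)$ is the $K$-coordinate of $(gf)(A,1)=g\bigl(f(A,1)\bigr)$. Writing $f(A,1)=(f'(A),\alpha_{(u,v)}(A))$ for the appropriate $u,v$, the recorded fact applied to $g$ (whose argument now has first coordinate $f'(A)$) shows that this $K$-coordinate is exactly $g'(f'(A))$. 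Since $A\in K$ was arbitrary, $(gf)'=g'f'$.

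For part (b) I would unwind the definition of the double prime from Proposition~\ref{autM}: $(gf)''(x)$ is determined by $(1,(gf)''(x))=(gf)(1,x)=g\bigl(f(1,x)\bigr)$. Because half-automorphisms fix $(1,M)$ setwise (Proposition~\ref{p31}), with $f(1,x)=(1,f''(x))$ and $g(1,y)=(1,g''(y))$, composing gives $(gf)(1,x)=(1,g''(f''(x)))$, and therefore $(gf)''=g''f''$. The whole proof is bookkeeping; the only step I would take care to make explicit is the initial observation that the $K$-component $h'$ is genuinely independent of the $M$-argument and of the $\pm$ type, since this is precisely what allows the first coordinates to compose cleanly in (a), and the companion fact that $(1,M)$ is preserved is what drives (b).
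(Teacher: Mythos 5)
Your proof is correct; part (b) coincides essentially word-for-word with the paper's computation, and your preliminary check that $gf$ is again a half-automorphism (so that $(gf)'$ and $(gf)''$ are even defined) is a point the paper leaves implicit. The difference is in part (a). The paper does not invoke the classification of half-automorphisms at all: it writes $f(A,1)=(f'(A),\alpha_{(x,y)}(A))$ by Corollary \ref{c33}, views this element as the product $(f'(A),1)*(1,\alpha_{(x,y)}(A))$, and uses only the half-automorphism property of $g$ together with $g(1,\alpha_{(x,y)}(A))=(1,g''(\alpha_{(x,y)}(A)))$ to conclude that, whichever of the two orderings occurs, the $K$-coordinate of $g(f'(A),\alpha_{(x,y)}(A))$ is $g'f'(A)$. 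You instead quote the classification (every half-automorphism is of type $F^{+}$ or $F^{-}$) to obtain your key fact that the first coordinate of $h(A,x)$ is always $h'(A)$, independent of $x$ and of the sign. This is legitimate and non-circular here, since the classification is established before Lemma \ref{l41} and does not depend on it; it buys you a shorter, purely mechanical argument, whereas the paper's version is more elementary and self-contained (it would survive even if the classification were unavailable). One citation caveat: Propositions \ref{auto} and \ref{p36} by themselves cover only automorphisms and proper half-automorphisms; to conclude that \emph{every} half-automorphism is of type $F^{\pm}$ you also need Corollary \ref{cor22} (ruling out anti-automorphisms), or you can simply cite Theorem \ref{t31}, which packages exactly this. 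Alternatively, your key fact already follows, with no classification at all, from the containment $f(A,x)\in\{(f'(A),f''(x)\alpha_{(u,v)}(A)),(f'(A),f''(x^{-1})\alpha_{(u,v)}(A))\}$ derived just before Proposition \ref{p34}, which would bring your argument even closer to the paper's.
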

\begin{proof} (a) By Corollary \ref{c33}, $f(A,1) = (f'(A),\alpha_{(x,y)}(A))$, $g(A,1) = (g'(A),\alpha_{(x',y')}(A))$ and $(gf)(A,1) = ((gf)'(A),\alpha_{(x'',y'')}(A))$, for every $A\in K$. Then for $A\in K$, we have

\begin{center}
$((gf)'(A),\alpha_{(x'',y'')}(A)) = (gf)(A,1) = g((f'(A),\alpha_{(x,y)}(A)))$.
\end{center}
Since $g$ is a half-automorphism and $g(1,\alpha_{(x,y)}(A)) = (1,g''(\alpha_{(x,y)}(A)))$, there exists $\epsilon\in \{-1,1\}$ such that $g((f'(A),\alpha_{(x,y)}(A))) = (g'f'(A),g''(\alpha_{(x,y)}(A)^\epsilon)\alpha_{(x',y')}(A))$. Then $(gf)'(A) = g'f'(A)$.\\
(b) For $x\in M$, we have that 

\begin{center}
$(1,(gf)''(x)) = (gf)(1,x) = g(1,f''(x)) = (1,g''f''(x))$,
\end{center}
and then $(gf)'' = g''f''$.
\end{proof}

Setting $K$ as $\{1,a,b,c\}$, the elements of $Aut(K)$ are the permutations $I$, $(a\, b)$, $(a\, c)$, $(b\, c)$, $(a\, b\, c)$ and $ (a\, c \, b)$, where $I$ is the identity mapping. Let $u,v$ be elements of $M$ such that  $o(u),o(v)\leq 2$. It is easy to see that, for all $A\in K$:

\begin{eqnarray}
\label{ess5}\nonumber 
\alpha_{(u,v)}(I(A)) = \alpha_{(u,v)}(A),\quad  \alpha_{(u,v)}((a\, b)(A)) = \alpha_{(v,u)}(A),\\
\alpha_{(u,v)}((a\, c)(A)) = \alpha_{(uv,v)}(A),\quad \alpha_{(u,v)}((b\, c)(A)) = \alpha_{(u,uv)}(A),\\ 
 \alpha_{(u,v)}((a\, b\, c)(A)) = \alpha_{(v,uv)}(A),\quad \alpha_{(u,v)}((a\, c \, b)(A)) = \alpha_{(uv,u)}(A).
\nonumber
\end{eqnarray}
Then for $f'\in Aut(K)$ and $A\in K$, we have
\begin{equation}
\label{eq41}
\alpha_{(u,v)}(f'(A)) = \alpha_{(\overline{f'}_1(u,v),\overline{f'}_2(u,v))}(A),
\end{equation}
for some mappings $\overline{f'}_1,\overline{f'}_2: M\times M \to M$. When $u\not = v$, we get $\overline{f'}_1(u,v) = \overline{f'}(u)$ and $\overline{f'}_2(u,v) = \overline{f'}(v)$, where:
\begin{equation}
\label{ess4}
\begin{array}{lcl}
& &\overline{I}(u) = u, \overline{I}(v) = v,\overline{(a\, b)} (u) = v, \overline{(a\, b)} (v) = u,\\\vspace{0.1cm} & &\overline{(a\, c)} (u) = uv, \overline{(a\, c)} (v) = v, \overline{(b\, c)} (u) = u, \overline{(b\, c)} (v) = uv,\\\vspace{0.1cm} & &\overline{(a\, b\, c)} (u) = v, \overline{(a\, b\, c)} (v) = uv,  \overline{(a\, c \, b)} (u) = uv, \overline{(a\, c \, b)} (v) = u.
\end{array}
\end{equation}

It is known that if $g''\in Aut(M)$, then $o(g''(u)) = o(u)$ and $o(g''(v)) = o(v)$, and, therefore, $\alpha_{(g''(u),g''(v))}$ is well-defined. Moreover, a straightfoward calculation shows us the following
\begin{lemma}
\label{l42} Let $g''\in Aut(M)$ and $u,v,u',v'\in M$ be such that their orders are smaller or equal to $2$. Then for all $A\in K$:
\begin{itemize}
    \item[(a)] $g''(\alpha_{(u,v)}(A)) = \alpha_{(g''(u),g''(v))}(A).$
    \item[(b)] $\alpha_{(u,v)}(A)\alpha_{(u',v')}(A) = \alpha_{(uu',vv')}(A).$
\end{itemize}
\end{lemma}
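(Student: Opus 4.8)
The plan is to prove both identities by a direct elementwise check, since $\alpha_{(u,v)}\colon K\to M$ is completely determined by its four values. First I would recall from Corollary~\ref{c33} that $\alpha_{(u,v)}(1)=1$, $\alpha_{(u,v)}(a)=u$, $\alpha_{(u,v)}(b)=v$ and $\alpha_{(u,v)}(c)=uv$. Both sides of (a) and of (b) are functions of $A\in K$, so it suffices to verify equality separately for each of the four elements $A\in\{1,a,b,c\}$; in every case the relevant value is simply read off from this prescription.

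For part (a), the cases $A=1$, $A=a$ and $A=b$ are immediate: the two sides reduce to $g''(1)=1$, $g''(u)$ and $g''(v)$ respectively, each matching $\alpha_{(g''(u),g''(v))}$ evaluated at the same point. The only case requiring more than substitution is $A=c$, where the left-hand side is $g''(uv)$ and the right-hand side is $g''(u)g''(v)$; these agree precisely because $g''$ is an automorphism, hence a homomorphism, of $M$. I would also note at the outset that $\alpha_{(g''(u),g''(v))}$ is a legitimate instance of the construction, since an automorphism preserves orders, so $o(g''(u))=o(u)\leq 2$ and likewise for $v$ — this is exactly the well-definedness observation stated just before the lemma.

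For part (b), the same three trivial cases $A=1,a,b$ give $1\cdot 1=1$, $uu'$ and $vv'$ on both sides. Again the single substantive case is $A=c$, where the left-hand side equals $(uv)(u'v')$ and the right-hand side equals $(uu')(vv')$; these coincide because $M$ is abelian, so the factors may be rearranged into one another. For well-definedness here I would observe that $uu'$ and $vv'$ are products of elements of order at most $2$ in an abelian group, hence themselves have order dividing $2$, so $\alpha_{(uu',vv')}$ is again a valid instance of the construction.

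In short, I expect no genuine obstacle: the content of the lemma is purely the homomorphism property of $g''$ (for (a)) and the commutativity of $M$ (for (b)). The main task is simply to organize the verification as four elementary cases and to confirm at the start that the target maps $\alpha_{(g''(u),g''(v))}$ and $\alpha_{(uu',vv')}$ satisfy the order hypothesis that the construction requires.
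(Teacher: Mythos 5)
Your proof is correct and is precisely the ``straightforward calculation'' that the paper invokes without writing out: a case-by-case check over $A\in\{1,a,b,c\}$, with the only substantive cases ($A=c$) resting on $g''$ being a homomorphism for (a) and on commutativity of $M$ for (b). Your added remarks on well-definedness (order preservation by $g''$, and $o(uu'),o(vv')\leq 2$ in an abelian group) are exactly the observations the paper makes just before stating the lemma, so there is nothing to add.
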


Theorem \ref{t31} gives us a label of every element of $Half(L_M)$. In the following, we describe the behavior of these elements in such label.
\begin{proposition}
\label{p41} Let  $ F^+_{(f',f'',u,v)}, F^-_{(f',f'',u,v)}, F^+_{(g',g'',u',v')},F^-_{(g',g'',u',v')}$ be half-automorphisms of $L_M$, where $f', g'$ are automorphisms of  the Klein group $K$, $f'', g''$ are automorphisms of the abelian group $M$ and $u, v$ are elements of $M$ of order at most $2$. Then
\begin{itemize}
    \item[(a)] $F^+_{(g',g'',u',v')} F^+_{(f',f'',u,v)} = F^+_{(g'f',g''f'',g''(u)\overline{f'}_1(u',v'),g''(v)\overline{f'}_2(u',v'))}$,
    \item[(b)] $F^+_{(g',g'',u',v')} F^-_{(f',f'',u,v)} = F^-_{(g'f',g''f'',g''(u)\overline{f'}_1(u',v'),g''(v)\overline{f'}_2(u',v'))}$,
    \item[(c)] $F^-_{(g',g'',u',v')} F^+_{(f',f'',u,v)} = F^-_{(g'f',g''f'',g''(u)\overline{f'}_1(u',v'),g''(v)\overline{f'}_2(u',v'))}$ and
    \item[(d)] $F^-_{(g',g'',u',v')} F^-_{(f',f'',u,v)} = F^+_{(g'f',g''f'',g''(u)\overline{f'}_1(u',v'),g''(v)\overline{f'}_2(u',v'))}$. 
\end{itemize}
\end{proposition}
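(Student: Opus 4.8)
\section*{Proof proposal}

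The plan is to verify all four identities by direct computation: evaluate both sides at an arbitrary $(A,x)\in L_M$ and split into the cases $A=1$ and $A\neq 1$. The ingredients are the defining formulas for $F^+$ and $F^-$, the component rules $(gf)'=g'f'$ and $(gf)''=g''f''$ from Lemma \ref{l41}, the two identities $g''(\alpha_{(u,v)}(A))=\alpha_{(g''(u),g''(v))}(A)$ and $\alpha_{(u,v)}(A)\alpha_{(u',v')}(A)=\alpha_{(uu',vv')}(A)$ from Lemma \ref{l42}, and the reindexing relation \eqref{eq41}. The first coordinate needs no case analysis: both the inner and outer maps act on the $K$-component through $f'$ and $g'$ alone, so the composite first coordinate is always $g'f'(A)=(gf)'(A)$, which matches the label on the right-hand side. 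Hence the entire argument lives in the second coordinate.

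I would treat (a) in full as the model case. Applying $F^+_{(f',f'',u,v)}$ sends $(A,x)$ to $(f'(A),f''(x)\alpha_{(u,v)}(A))$, and then $F^+_{(g',g'',u',v')}$ produces the second coordinate $g''\bigl(f''(x)\alpha_{(u,v)}(A)\bigr)\,\alpha_{(u',v')}(f'(A))$. As $g''$ is a homomorphism this is $g''f''(x)\cdot g''(\alpha_{(u,v)}(A))\cdot\alpha_{(u',v')}(f'(A))$; Lemma \ref{l42}(a) rewrites the middle factor as $\alpha_{(g''(u),g''(v))}(A)$ and \eqref{eq41} rewrites the last as $\alpha_{(\overline{f'}_1(u',v'),\overline{f'}_2(u',v'))}(A)$, whereupon Lemma \ref{l42}(b) merges the two $\alpha$-factors into $\alpha_{(g''(u)\overline{f'}_1(u',v'),\,g''(v)\overline{f'}_2(u',v'))}(A)$. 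This is precisely the second coordinate of the claimed composite, establishing (a).

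The remaining cases follow the same chain of rewrites, and the only new bookkeeping is tracking inversions. For $A=1$ one has $\alpha_{(u,v)}(1)=1$, so every $F^-$ coincides with the matching $F^+$ and all four identities reduce to $(1,g''f''(x))=(1,g''f''(x))$; the substance is the case $A\neq 1$, where $f'(A)\neq 1$ as well. An inner $F^-$ (cases (b) and (d)) replaces $f''(x)$ by $f''(x^{-1})=f''(x)^{-1}$ in the intermediate value, while an outer $F^-$ (cases (c) and (d)) additionally inverts the whole intermediate second coordinate before $g''$ is applied.

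The main obstacle, and the only step needing genuine care, is this outer inversion in (c) and (d), where one must simplify $\bigl(f''(x)^{\pm1}\alpha_{(u,v)}(A)\bigr)^{-1}$. Here I would invoke two facts: $M$ is abelian, so the factors commute and may be reordered at will, and $\alpha_{(u,v)}(A)\in\{1,u,v,uv\}$ has order at most $2$, so it is its own inverse. Together these show that the inversion leaves the $\alpha$-factor untouched and only flips the sign on the $f''$-factor. Counting flips then yields the stated signs: a single flip in (b) and (c) gives $F^-$, whereas the two flips in (d) cancel to give $F^+$. After this reduction each case collapses through exactly the computation of part (a). It remains only to note that the composite labels are admissible: $g''(u),g''(v)$ keep the orders of $u,v$ because $g''\in\mathrm{Aut}(M)$, the values $\overline{f'}_i(u',v')$ are of order at most $2$, and a product of two elements of order at most $2$ in the abelian group $M$ again has order at most $2$; thus each composite is a legitimate member of the family described in Theorem \ref{t31}.
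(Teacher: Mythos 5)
Your proposal is correct and takes essentially the same route as the paper's proof: a direct coordinate-by-coordinate computation splitting on $A=1$ versus $A\neq 1$, using the defining formulas together with Lemma \ref{l42} and \eqref{eq41}, with the first coordinate handled uniformly. The only difference is one of presentation --- the paper computes case (b) in full and dismisses the rest as ``similar,'' whereas you compute (a) as the model and then explicitly justify the outer-inversion bookkeeping in (c) and (d) (namely that, $M$ being abelian and $\alpha_{(u,v)}(A)$ being an involution, inverting the intermediate second coordinate only flips the $f''$-factor), which is in fact the one subtlety the paper's chosen case never meets.
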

\begin{proof} We will just prove item (b). The others can be, similarly, proved. Denote $F^+_{(g',g'',u',v')}$ and $ F^-_{(f',f'',u,v)}$ by $g$ and $f$, respectively. Then for every $A$, $B$ in $K$ and $x$, $y$ in M, we have $g(B,y)=(g'(B),g''(y)\alpha_{(u',v')}(B))$ and 

\begin{center}
$f(A,x)=\left\{\begin{array}{rc}
(1,f''(x)),&\mbox{if}\quad A= 1,\\
(f'(A),f''(x^{-1})\alpha_{(u,v)}(A)), &\mbox{otherwise}.
\end{array}\right.$
\end{center}
If $A = 1$, then 

\begin{center}
$g(f(A,x)) = g((1,f''(x))) = (1,g''f''(x))$,
\end{center}
and if $A\not = 1$, 

\begin{flushleft}
$\begin{array}{lcl}
g(f(A,x)) &=& g((f'(A),f''(x^{-1})\alpha_{(u,v)}(A))) \\
 &=&  (g'(f'(A)),g''(f''(x^{-1})\alpha_{(u,v)}(A))\alpha_{(u',v')}(f'(A)))\\
 &=&  (g'(f'(A)),g''(f''(x^{-1}))g''(\alpha_{(u,v)}(A))\alpha_{(\overline{f'}_1(u',v'),\overline{f'}_2(u',v'))}(A))\\
 &=& (g'f'(A),g''f''(x^{-1})\alpha_{(g''(u)\overline{f'}_1(u',v'),g''(v)\overline{f'}_2(u',v'))}(A)).
\end{array}$
\end{flushleft}
\end{proof}
\begin{corollary}
\label{c41} $Half(L_M) \cong C_2\times Aut(L_M)$.
\end{corollary}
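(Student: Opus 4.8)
The plan is to realize $Half(L_M)$ as an \emph{internal} direct product of the normal subgroup $Aut(L_M)$ and a central subgroup of order $2$. First I would record what the earlier results already give us: by Proposition \ref{auto} the automorphisms of $L_M$ are exactly the maps $F^+_{(f',f'',u,v)}$, and by Proposition \ref{p36} the proper half-automorphisms are exactly the maps $F^-_{(f',f'',u,v)}$, so Theorem \ref{t31} partitions $Half(L_M)$ into a ``$+$'' half and a ``$-$'' half. The crucial observation from Proposition \ref{p41} is that in all four product formulas the resulting parameter tuple is literally the same expression; only the superscript varies, and it varies exactly like a product of signs ($+\cdot+=+$, $+\cdot-=-$, $-\cdot+=-$, $-\cdot-=+$). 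Consequently the map $\mathrm{sgn}\colon Half(L_M)\to C_2$ sending every $F^+_{(f',f'',u,v)}$ to the identity and every $F^-_{(f',f'',u,v)}$ to the generator is a surjective group homomorphism whose kernel is precisely $Aut(L_M)$; in particular $Aut(L_M)$ is normal of index $2$.

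Next I would produce a central complement. The natural candidate is $z:=F^-_{(I,\mathrm{id}_M,1,1)}$, the proper half-automorphism of Proposition \ref{p34} attached to the trivial data $f'=I$, $f''=\mathrm{id}_M$, $u=v=1$. Applying Proposition \ref{p41}(d) to $z\cdot z$ gives $z^2=F^+_{(I,\mathrm{id}_M,1,1)}$, which is the identity map of $L_M$; since $z$ is a proper half-automorphism it is not an automorphism, so $\langle z\rangle\cong C_2$ and $\langle z\rangle\cap Aut(L_M)=\{\mathrm{id}_{L_M}\}$.

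The heart of the argument, and the step most prone to bookkeeping errors, is verifying that $z$ is central in $Half(L_M)$. This rests on two evaluations read directly off the definitions preceding Lemma \ref{l42}: because $\alpha_{(1,1)}$ is the trivial homomorphism, equation \eqref{eq41} forces $\overline{f'}_1(1,1)=\overline{f'}_2(1,1)=1$ for every $f'\in Aut(K)$; and because $I$ is the identity permutation of $K$, we have $\overline{I}_1(u,v)=u$ and $\overline{I}_2(u,v)=v$ for all admissible $u,v$. Substituting the data of $z$ into the four formulas of Proposition \ref{p41} and using these two facts, every product $z\,F^{\pm}_{(f',f'',u,v)}$ and $F^{\pm}_{(f',f'',u,v)}\,z$ collapses to $F^{\mp}_{(f',f'',u,v)}$: the superscript flips while the parameter tuple $(f',f'',u,v)$ is preserved. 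The care required is entirely in tracking which factor's parameters play the role of the ``left'' data $(g',g'',u',v')$ and which play the ``right'' data $(f',f'',u,v)$ in each of the items (a)--(d); once this is done correctly the computation is immediate. Hence $zg=gz$ for every $g\in Half(L_M)$, so $z$ is central.

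Finally I would assemble the pieces. The centrality computation in particular yields $z\,F^+_{(f',f'',u,v)}=F^-_{(f',f'',u,v)}$, which shows that every proper half-automorphism is $z$ times an automorphism; therefore $Half(L_M)=\langle z\rangle\,Aut(L_M)$. Since $\langle z\rangle$ is central (hence normal) and $Aut(L_M)$ is normal, with $\langle z\rangle\cap Aut(L_M)=\{\mathrm{id}_{L_M}\}$ and $\langle z\rangle\,Aut(L_M)=Half(L_M)$, the group $Half(L_M)$ is the internal direct product of $\langle z\rangle$ and $Aut(L_M)$, giving $Half(L_M)\cong C_2\times Aut(L_M)$, as claimed.
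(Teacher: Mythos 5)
Your proof is correct and rests on exactly the ingredients the paper itself cites for this corollary (Propositions \ref{p34}, \ref{auto}, \ref{p36} and the multiplication rules of Proposition \ref{p41}): the superscript multiplies like $C_2$ while the parameter tuple composes like $Aut(L_M)$. The paper's proof is a one-line citation of these facts, and your write-up --- realizing the splitting as an internal direct product via the explicit central involution $z=F^-_{(I,\mathrm{id}_M,1,1)}$ --- is a careful, rigorous rendering of that same argument (with the minor virtue of never needing uniqueness of the labels $(f',f'',u,v)$, only the dichotomy between automorphisms and proper half-automorphisms).
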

\begin{proof} It is a consequence of Propositions \ref{p34}, \ref{auto} and \ref{p41}.
\end{proof}

In order to reach a full description of the group $Half(L_M)$, we aim to describe the automorphisms group $Aut(L_M)$.
\begin{proposition}
\label{p42} Let $\mathcal{A} = \{F_{(f',f'',1,1)}^+\,|\,f'\in Aut(K), f''\in Aut(M)\}$. Then
 \begin{center}
$\mathcal{A} \cong Aut(K) \times Aut(M)$.
\end{center}
\end{proposition}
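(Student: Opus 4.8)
The plan is to construct an explicit isomorphism $\Phi:\mathcal{A}\to Aut(K)\times Aut(M)$ sending $F^+_{(f',f'',1,1)}$ to the pair $(f',f'')$, and to verify it is a bijective homomorphism. The first step is to record the concrete form of the members of $\mathcal{A}$: since $\alpha_{(1,1)}(A)=1$ for every $A\in K$ (because $\alpha_{(1,1)}(1)=\alpha_{(1,1)}(a)=\alpha_{(1,1)}(b)=\alpha_{(1,1)}(c)=1$), each element of $\mathcal{A}$ acts coordinatewise, namely $F^+_{(f',f'',1,1)}(A,x)=(f'(A),f''(x))$. This description makes the injectivity of $\Phi$ transparent: reading off the first coordinate recovers $f'$ and the second recovers $f''$, so distinct labels give distinct maps; surjectivity of $\Phi$ holds by the very definition of $\mathcal{A}$.

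Next I would check that $\mathcal{A}$ is a subgroup of $Half(L_M)$ and that $\Phi$ is a homomorphism, both of which follow from the composition rule in Proposition \ref{p41}(a). The key observation here --- which I expect to be the crux of the argument --- is that when $u=v=u'=v'=1$ the ``twisting'' terms disappear. Indeed, specializing \eqref{eq41} to $u=v=1$ gives $\alpha_{(\overline{f'}_1(1,1),\overline{f'}_2(1,1))}(A)=\alpha_{(1,1)}(f'(A))=1$ for all $A$, which forces $\overline{f'}_1(1,1)=\overline{f'}_2(1,1)=1$. Substituting into Proposition \ref{p41}(a) then yields
$$F^+_{(g',g'',1,1)}\,F^+_{(f',f'',1,1)}=F^+_{(g'f',\,g''f'',\,1,1)}.$$
In particular $\mathcal{A}$ is closed under composition, contains the identity $F^+_{(I,\mathrm{id}_M,1,1)}$, and contains the inverse $F^+_{((f')^{-1},(f'')^{-1},1,1)}$ of each of its elements, so $\mathcal{A}\leq Half(L_M)$.

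Finally, the displayed composition formula shows directly that $\Phi\bigl(F^+_{(g',g'',1,1)}F^+_{(f',f'',1,1)}\bigr)=(g'f',g''f'')=(g',g'')(f',f'')$, so $\Phi$ is a homomorphism; combined with the bijectivity established above, this gives $\mathcal{A}\cong Aut(K)\times Aut(M)$. The only delicate point in the whole argument is the vanishing of the cross terms $\overline{f'}_1(1,1)$ and $\overline{f'}_2(1,1)$: it is precisely this that makes the product direct rather than a nontrivial semidirect product, reflecting the fact that with $u=v=1$ the $Aut(K)$-part and the $Aut(M)$-part of an automorphism do not interact.
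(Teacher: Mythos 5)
Your proposal is correct and takes essentially the same approach as the paper: both define the map $F^+_{(f',f'',1,1)}\mapsto(f',f'')$ and verify it is a bijective homomorphism onto $Aut(K)\times Aut(M)$. The only cosmetic difference is that you obtain the composition rule $F^+_{(g',g'',1,1)}F^+_{(f',f'',1,1)}=F^+_{(g'f',g''f'',1,1)}$ by specializing Proposition \ref{p41}(a) to trivial parameters, whereas the paper computes $gf(A,x)=(g'f'(A),g''f''(x))$ directly from the coordinatewise form of the maps.
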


\begin{proof} Let $F_{(f',f'',1,1)}^+,F_{(g',g'',1,1)}^+ \in \mathcal{A}$ and denote $f = F_{(f',f'',1,1)}^+$ and $g = F_{(g',g'',1,1)}^+$. Then $f(A,x) = (f'(A),f''(x))$ and $g(A,x) = (g'(A),g''(x))$, for every $A\in K$ and $x\in M$. Also

\begin{equation}
\label{eqp42}
gf(A,x) = (g'f'(A),g''f''(x)) = ((gf)'(A),(gf)''(x)).
\end{equation}
Define the mapping $\Psi: \mathcal{A} \to Aut(K) \times Aut(M)$ by $\Psi(F_{(\phi,\varphi,1,1)}^+) = (\phi,\varphi)$. Clearly, $\Psi$ is a bijection and, furthermore,

\begin{center}
$\Psi(gf) = ((gf)',(gf)'') = (g'f',g''f'') = (g',g'')(f',f'') = \psi(g)\psi(f)$.
\end{center}
Therefore, $\Psi$ is an isomorphism from $\mathcal{A}$ to $Aut(K) \times Aut(M)$.
\end{proof}

The following result allows us to describe the group $Half(L_M)$ in terms of the automorphism group of $M$, for a special case of $M$.

\begin{theorem}
\label{c42} Let $K$ be the Klein group, let $M$ be an abelian group of odd order and let $L_M=K\times M$ be the Bol loop constructed in Section $\ref{bol}$. Then
\begin{center}
$Aut(L_M)\cong S_3 \times Aut(M)$ and $Half(L_M) \cong C_2\times S_3 \times Aut(M)$.
\end{center}
\end{theorem}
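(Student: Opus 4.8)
The plan is to leverage the machinery already assembled and reduce the theorem to a single observation about the odd-order hypothesis. The crucial point is that when $|M|$ is odd, $M$ has no element of order $2$, so the only $u\in M$ with $o(u)\leq 2$ is $u=1$; consequently every label parameter $u,v$ appearing in the description of Theorem \ref{t31} is forced to be trivial.

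First I would pin down $Aut(L_M)$. By Proposition \ref{auto}, every automorphism $g$ of $L_M$ has the form $g=g^+_{(u,v)}=F^+_{(g',g'',u,v)}$ with $g'\in Aut(K)$, $g''\in Aut(M)$, and $o(u),o(v)\leq 2$. Since the odd-order hypothesis forces $u=v=1$, this yields $Aut(L_M)=\mathcal{A}=\{F^+_{(f',f'',1,1)}\mid f'\in Aut(K),\,f''\in Aut(M)\}$. Proposition \ref{p42} then gives $\mathcal{A}\cong Aut(K)\times Aut(M)$, and since $Aut(K)=S_3$ (the Klein group has automorphism group $S_3$), I conclude $Aut(L_M)\cong S_3\times Aut(M)$.

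For the half-automorphism group, Corollary \ref{c41} already supplies $Half(L_M)\cong C_2\times Aut(L_M)$. Substituting the identification of $Aut(L_M)$ just obtained yields $Half(L_M)\cong C_2\times S_3\times Aut(M)$, which completes the argument.

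Because each step invokes a previously established result, there is essentially no computational obstacle here; the only substantive move is recognizing that the odd-order hypothesis collapses the parameters $u,v$ to the identity, thereby restricting the general description of Theorem \ref{t31} to the subgroup $\mathcal{A}$ handled by Proposition \ref{p42}. The one point I would verify explicitly is that, in a group of odd order, the only element whose order divides $2$ is the identity, so that indeed $o(u)\leq 2$ implies $u=1$; the compatibility of the direct-product splitting in Corollary \ref{c41} with the further factorization of $Aut(L_M)$ is then immediate.
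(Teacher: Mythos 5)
Your proposal is correct and follows essentially the same route as the paper: the odd-order hypothesis forces $u=v=1$ in the parametrization of Theorem \ref{t31} and Proposition \ref{auto}, so $Aut(L_M)=\mathcal{A}\cong S_3\times Aut(M)$ by Proposition \ref{p42}, and then Corollary \ref{c41} yields $Half(L_M)\cong C_2\times S_3\times Aut(M)$. The only cosmetic difference is that you spell out explicitly the (trivial) fact that a group of odd order has no element of order $2$, which the paper states without elaboration.
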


\begin{proof} First observe that $M$ has no elements of order $2$. By Proposition ~\ref{auto} and Theorem~\ref{t31}, $Aut(L_M) = \{F_{(f',f'',1,1)}^+\,|\,f'\in Aut(K), f''\in Aut(M)\}$. Since $Aut(K) \cong S_3$, the result follows from Corollary \ref{c41}, Proposition \ref{p42}. 
\end{proof}

From now on we consider $|M|$ even. Then $M = C_{2i_1}\times C_{2i_2}\times... \times C_{2i_s} \times M_1$, where $M_1$ is an abelian group of odd order, $s\geq 1$ and $i_j\geq 1$, for all $j$. Set $H = \{x\in M\,|\, o(x)\leq 2\}$. It is known that $H$ is a subgroup of $M$ of exponent $2$ and, by the proof of Corollary ~\ref{c32}, we have that $|H| = 2^s$. Denote by $I_K$ and $I_M$ the identity mappings of $K$ and $M$, respectively. Define

\begin{center}
$\mathcal{B} = \{F_{(I_K,I_M,x,y)}^+\,|\,x,y\in H\}$.
\end{center}
By Proposition \ref{p41} (a), $\mathcal{B}$ is closed under compositions, and, since $M$ is finite, $\mathcal{B}$ is a subgroup of $Aut(L_M)$. Considering $\mathcal{A}$ as in Proposition \ref{p42}, we have that $|\mathcal{A} \cap \mathcal{B}| = 1$, and so 

\begin{equation}
\label{eq57} |\mathcal{A}\, \mathcal{B}| = |\mathcal{A}|.|\mathcal{B}| = 2^{2s}.|Aut(K)|.|Aut(M)|
\end{equation}

\begin{proposition}
\label{p43} Let $K$ be the Klein group, let $M$ be an abelian group of even order and let $L_M=K\times M$ be the Bol loop constructed in Section \ref{bol}. $Aut(L_M) = \mathcal{A}\, \mathcal{B} =  \mathcal{B}\, \mathcal{A}$.
\end{proposition}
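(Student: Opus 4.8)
The plan is to argue by a cardinality count, since every element of $Half(L_M)$ has already been assigned an explicit label. First I would observe that both $\mathcal{A}$ and $\mathcal{B}$ consist of maps of the form $F^+_{(\cdot,\cdot,\cdot,\cdot)}$, which are automorphisms of $L_M$ by Proposition \ref{p34}; since $Aut(L_M)$ is a group, it is closed under composition, so both product sets $\mathcal{A}\,\mathcal{B}$ and $\mathcal{B}\,\mathcal{A}$ are contained in $Aut(L_M)$. It then suffices to show that each product set has exactly $|Aut(L_M)|$ elements.

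For the value of $|Aut(L_M)|$ I would combine Theorem \ref{t31} with Corollary \ref{c41}: the isomorphism $Half(L_M)\cong C_2\times Aut(L_M)$ gives $|Aut(L_M)| = \tfrac{1}{2}|Half(L_M)| = 2^{2s}\,|Aut(K)|\,|Aut(M)|$. On the other hand, $\mathcal{A}$ and $\mathcal{B}$ are subgroups of $Aut(L_M)$ (the former by Proposition \ref{p42}, the latter by the closure obtained via Proposition \ref{p41}(a)), and their intersection is trivial, so the standard subgroup product formula $|\mathcal{A}\,\mathcal{B}| = |\mathcal{A}|\,|\mathcal{B}|/|\mathcal{A}\cap\mathcal{B}| = 2^{2s}\,|Aut(K)|\,|Aut(M)|$ holds, which is precisely \eqref{eq57}. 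Matching the two counts against the inclusion above yields $\mathcal{A}\,\mathcal{B} = Aut(L_M)$.

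For the reverse product I would invoke the elementary fact that any two subgroups $A,B$ of a finite group satisfy $|AB| = |BA|$ (inversion is a bijection between the two sets). Hence $|\mathcal{B}\,\mathcal{A}| = |\mathcal{A}\,\mathcal{B}| = |Aut(L_M)|$, and combined with $\mathcal{B}\,\mathcal{A}\subseteq Aut(L_M)$ this forces $\mathcal{B}\,\mathcal{A} = Aut(L_M)$ as well, completing the proof.

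As an alternative to the last two steps, one could compute the product sets directly from Proposition \ref{p41}(a): writing a typical element of $\mathcal{A}$ as $F^+_{(\phi,\psi,1,1)}$ and one of $\mathcal{B}$ as $F^+_{(I_K,I_M,x,y)}$, the composition formula returns $F^+_{(\phi,\psi,\psi(x),\psi(y))}$ in one order and $F^+_{(\phi,\psi,\overline{\phi}_1(x,y),\overline{\phi}_2(x,y))}$ in the other. The one point requiring care — and the main obstacle in this direct route — is that these assignments must sweep out the entire parameter set $\{(\phi,\psi,u,v): u,v\in H\}$ that describes $Aut(L_M)$ via Proposition \ref{auto} and Theorem \ref{t31}; this holds because $\psi$ restricts to a bijection of $H$ (automorphisms preserve order) and because $(x,y)\mapsto(\overline{\phi}_1(x,y),\overline{\phi}_2(x,y))$ is a bijection of $H\times H$, being precomposition of the homomorphisms $K\to H$ with the automorphism $\phi$. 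The cardinality argument bypasses this surjectivity check entirely, which is why I would present it as the main line.
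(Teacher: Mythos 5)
Your proof is correct and takes essentially the same route as the paper: both arguments reduce to a cardinality count, establishing $|Aut(L_M)| = 2^{2s}\,|Aut(K)|\,|Aut(M)|$ (the paper via Theorem \ref{t31} together with Proposition \ref{auto}, you via Theorem \ref{t31} together with Corollary \ref{c41}, which amounts to the same thing) and then matching this against Equation \eqref{eq57}. The additional details you make explicit --- the containment $\mathcal{A}\,\mathcal{B}\subseteq Aut(L_M)$ and the inversion bijection giving $|\mathcal{B}\,\mathcal{A}| = |\mathcal{A}\,\mathcal{B}|$ --- are steps the paper leaves implicit.
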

\begin{proof} By Theorem \ref{t31} and Propositon \ref{auto}, $|Aut(L_M)| = 2^{2s}.|Aut(K)|.|Aut(M)|$. Then the result follows from Equation \ref{eq57}.
\end{proof}

\begin{proposition}
\label{p44} Using the notation above, 
\begin{itemize}
    \item[(a)] $\mathcal{B} \cong H \times H \cong C_2^{2s}$,
    \item[(b)] $\mathcal{B} \triangleleft Aut(L_M)$ and
    \item[(c)] $Aut(L_M)/\mathcal{B} \cong \mathcal{A}$.
\end{itemize}
\end{proposition}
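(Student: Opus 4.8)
The plan is to handle the three items in order, using the composition formulas of Proposition~\ref{p41}(a) as the computational engine throughout; only item (b) requires genuine work, while (a) and (c) are essentially formal.

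For (a), I would define $\Phi\colon H\times H\to\mathcal{B}$ by $\Phi(x,y)=F^+_{(I_K,I_M,x,y)}$, where $H\times H$ carries the componentwise product, and show $\Phi$ is an isomorphism. Evaluating $F^+_{(I_K,I_M,x,y)}$ at $(a,1)$ and $(b,1)$ recovers $x$ and $y$ (since $\alpha_{(x,y)}(a)=x$ and $\alpha_{(x,y)}(b)=y$), so $\Phi$ is injective, and it is surjective by the definition of $\mathcal{B}$. To see it is a homomorphism, I apply Proposition~\ref{p41}(a) with $f'=g'=I_K$ and $f''=g''=I_M$; the point is that for the identity automorphism one has $\overline{I_K}_1(x,y)=x$ and $\overline{I_K}_2(x,y)=y$, read off from \eqref{eq41} by taking $A=a$ and $A=b$. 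Hence $\Phi(x,y)\Phi(x',y')=F^+_{(I_K,I_M,x'x,y'y)}=\Phi(xx',yy')$, using that $H$ is abelian. Thus $\mathcal{B}\cong H\times H$, and since $H$ is elementary abelian of order $2^s$ we obtain $\mathcal{B}\cong C_2^{2s}$.

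For (b), since $Aut(L_M)=\mathcal{A}\mathcal{B}$ by Proposition~\ref{p43} and $\mathcal{B}$ is abelian by (a), it suffices to show that $\mathcal{A}$ normalizes $\mathcal{B}$: writing $g=ab$ with $a\in\mathcal{A}$, $b\in\mathcal{B}$, one has $gb'g^{-1}=ab'a^{-1}$ for $b'\in\mathcal{B}$ because $\mathcal{B}$ is abelian. I would first record that the inverse of $a=F^+_{(f',f'',1,1)}$ is $F^+_{(f'^{-1},f''^{-1},1,1)}$, which follows from Proposition~\ref{p42} together with $\overline{f'}_1(1,1)=\overline{f'}_2(1,1)=1$ (since $\alpha_{(1,1)}$ is the trivial homomorphism). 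Then I would compute $ab'a^{-1}$ for $b'=F^+_{(I_K,I_M,x,y)}$ by two applications of Proposition~\ref{p41}(a); after using $\overline{f'^{-1}}_i(1,1)=1$ the result collapses to
$$ab'a^{-1}=F^+_{(I_K,I_M,\,f''(\overline{f'^{-1}}_1(x,y)),\,f''(\overline{f'^{-1}}_2(x,y)))}.$$
To conclude this lies in $\mathcal{B}$, I note that $\overline{f'^{-1}}_1(x,y),\overline{f'^{-1}}_2(x,y)\in\{x,y,xy\}\subseteq H$ by \eqref{ess4}, and that $f''\in Aut(M)$ maps $H$ to $H$ because automorphisms preserve the orders of elements; hence both coordinates lie in $H$ and $ab'a^{-1}\in\mathcal{B}$. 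This yields $\mathcal{B}\triangleleft Aut(L_M)$.

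For (c), item (b) gives normality of $\mathcal{B}$; together with $Aut(L_M)=\mathcal{A}\mathcal{B}$ (Proposition~\ref{p43}) and $\mathcal{A}\cap\mathcal{B}=\{1\}$ (noted just before \eqref{eq57}), the second isomorphism theorem yields $Aut(L_M)/\mathcal{B}=\mathcal{A}\mathcal{B}/\mathcal{B}\cong\mathcal{A}/(\mathcal{A}\cap\mathcal{B})\cong\mathcal{A}$. The main obstacle is the conjugation computation in (b): keeping the two nested applications of the composition rule straight and, above all, verifying that the twisting maps $\overline{f'^{-1}}_i$ send pairs from $H$ back into $H$ and that $f''$ fixes $H$ setwise, so that the conjugate genuinely has both $M$-coordinates of order at most $2$ and therefore belongs to $\mathcal{B}$.
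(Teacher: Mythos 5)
Your proof is correct, but it proceeds differently from the paper's. For (a), the paper does not build an explicit isomorphism: it notes that $|\mathcal{B}|=2^{2s}$ (from the parametrization) and then shows by a one-line computation that every $F^+_{(I_K,I_M,u,v)}$ squares to the identity, so $\mathcal{B}$ is a finite group of exponent $2$, hence elementary abelian of order $2^{2s}$; your direct isomorphism $\Phi\colon H\times H\to\mathcal{B}$ makes the same conclusion without invoking the classification of exponent-$2$ groups. The bigger divergence is in (b) and (c): the paper handles both at once by defining the projection $\psi\colon Aut(L_M)\to\mathcal{A}$, $F^+_{(f',f'',u,v)}\mapsto F^+_{(f',f'',1,1)}$, checking via Proposition \ref{p41} that it is a surjective homomorphism with kernel exactly $\mathcal{B}$, and applying the first isomorphism theorem --- no conjugation computation at all. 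You instead prove normality by an explicit computation of $ab'a^{-1}$ (reducing to conjugation by $\mathcal{A}$ via commutativity of $\mathcal{B}$ and Proposition \ref{p43}) and then get (c) from the second isomorphism theorem. Both routes are sound; the paper's is shorter, while yours has the side benefit of producing the explicit formula
$$ab'a^{-1}=F^+_{(I_K,I_M,\,f''(\overline{f'^{-1}}_1(x,y)),\,f''(\overline{f'^{-1}}_2(x,y)))},$$
which is precisely the action $\sigma$ used in the semidirect product decomposition of Theorem \ref{t41}, so your extra work is not wasted in the context of the paper. One small citation point: the fact that $\overline{f'^{-1}}_i(x,y)\in\{x,y,xy\}$ for arbitrary $x,y$ should be read off from \eqref{ess5}, which holds for all $u,v$, rather than \eqref{ess4}, which the paper states only under the hypothesis $u\neq v$; this does not affect the validity of your argument.
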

\begin{proof} (a) We only have to prove that $\mathcal{B}$ has exponent $2$. Let $f = F_{(I_K,I_M,u,v)}^+ \in \mathcal{B}$. Since $f(A,x) = (A,x\alpha_{(u,v)}(A))$, we have that

\begin{center}
$f^2(A,x) = f(A,x\alpha_{(u,v)}(A)) = (A,x\alpha_{(u,v)}(A)\alpha_{(u,v)}(A)) = (A,x)$.
\end{center}
Hence, $f^2$ is the identity mapping of $L_M$ and we have that $\mathcal{B}$ has exponent $2$.\\
\noindent{}(b) and (c) Define the mapping $\psi: Aut(L_M) \to \mathcal{A}$, $\psi(F^+_{(f',f'',u,v)}) = F^+_{(f',f'',1,1)}$. It is clear that $\psi$ is surjective. Let $F^+_{(f',f'',u,v)}, F^+_{(g',g'',u',v')}\in Aut(L_M)$. By Proposition \ref{p41}, we have that  $\psi(F^+_{(g',g'',u',v')}F^+_{(f',f'',u,v)})= F^+_{(g'f',g''f'',1,1)}$, and using \eqref{eqp42} we get

\begin{center}
$\psi(F^+_{(g',g'',u',v')}F^+_{(f',f'',u,v)}) = F^+_{(g',g'',1,1)}F^+_{(f',f'',1,1)} = \psi(F^+_{(g',g'',u',v')})\psi(F^+_{(f',f'',u,v)})$.
\end{center}
Then $\psi$ is a homomorphism. Now note that 

\begin{center}
$Ker(\psi) = \{F_{(I_K,I_M,u,v)}^+\,|\,u,v\in H\} = \mathcal{B}$.
\end{center}
Hence, $\mathcal{B} \triangleleft Aut(L_M)$ and $Aut(L_M)/\mathcal{B} \cong \mathcal{A}$.
\end{proof}
Define $\sigma: \mathcal{A} \to \mathcal{B}$, where, for $\alpha \in \mathcal{A}$, $\sigma(\alpha) = \sigma_\alpha$ and $\sigma_\alpha(\beta) = \alpha \beta \alpha^{-1}$, for all $\beta\in \mathcal{B}$. Since $\mathcal{B} \triangleleft Aut(L_M)$, then $\sigma$ is a homomorphism and we can contruct the inner semidirect product $\mathcal{A} \stackrel{\sigma}{\ltimes} \mathcal{B}$, where the operation is given by

\begin{center}
$(\alpha,\beta)\cdot (\alpha',\beta') = (\alpha\alpha',\beta\alpha \beta' \alpha^{-1})$, ($\alpha,\alpha'\in \mathcal{A}$, $\beta,\beta'\in \mathcal{B}$).
\end{center}
Define $\psi: Aut(L_M) \to \mathcal{A} \stackrel{\sigma}{\ltimes} \mathcal{B}$ by $\psi(\beta \alpha) = (\alpha,\beta)$. Then $\psi$ is a bijection. Furthermore,

\begin{center}
$\psi((\beta \alpha)(\beta' \alpha')) = \psi(\beta (\alpha \beta'\alpha^{-1}) \alpha \alpha') = (\alpha \alpha',\beta (\alpha \beta'\alpha^{-1})) = (\alpha,\beta)\cdot (\alpha',\beta')$.
\end{center}
Thus  $Aut(L_M) \cong \mathcal{A} \stackrel{\sigma}{\ltimes} \mathcal{B}$. Hence, we established the following result.

\begin{theorem}
\label{t41} Let $M$ be a finite abelian group such that its exponent is greater than $2$. Write $M = C_{2i_1}\times C_{2i_2}\times... \times C_{2i_s} \times M_1$, where $M_1$ is an abelian group of odd order, $s\geq 1$ and $i_j\geq 1$, for all $j$. Then
\begin{center}
$Aut(L_M) \cong \mathcal{A} \stackrel{\sigma}{\ltimes} \mathcal{B}$ and $Half(L_M) \cong C_2\times (\mathcal{A} \stackrel{\sigma}{\ltimes} \mathcal{B})$,
\end{center}
where $\mathcal{A} \cong S_3 \times Aut(M)$ and $\mathcal{B} \cong C_2^{2s}$.
\end{theorem}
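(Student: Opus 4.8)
The plan is to assemble the structural facts already proved for the even case (which is the relevant one here, since $s\geq 1$ forces $|M|$ to be even) and simply read off the three claimed isomorphisms. First I would record the two building blocks that describe the factors. By Proposition \ref{p42} we have $\mathcal{A}\cong Aut(K)\times Aut(M)$, and since $Aut(K)\cong S_3$ this gives $\mathcal{A}\cong S_3\times Aut(M)$; by Proposition \ref{p44}(a) we have $\mathcal{B}\cong C_2^{2s}$. These are exactly the two descriptions appearing in the statement, so it remains only to identify $Aut(L_M)$ and $Half(L_M)$ with the asserted (semidirect) products.

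Next I would establish $Aut(L_M)\cong \mathcal{A}\stackrel{\sigma}{\ltimes}\mathcal{B}$, which is the heart of the theorem but whose ingredients are already in place. Proposition \ref{p43} yields the factorization $Aut(L_M)=\mathcal{B}\mathcal{A}$, and the equality $|\mathcal{A}\,\mathcal{B}|=|\mathcal{A}||\mathcal{B}|$ in \eqref{eq57} records $\mathcal{A}\cap\mathcal{B}=\{1\}$; together these guarantee that each automorphism of $L_M$ has a \emph{unique} expression $\beta\alpha$ with $\beta\in\mathcal{B}$, $\alpha\in\mathcal{A}$. This is precisely what makes the map $\psi(\beta\alpha)=(\alpha,\beta)$ a well-defined bijection onto $\mathcal{A}\stackrel{\sigma}{\ltimes}\mathcal{B}$. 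Proposition \ref{p44}(b) supplies $\mathcal{B}\triangleleft Aut(L_M)$, which both makes $\sigma_\alpha(\beta)=\alpha\beta\alpha^{-1}$ an automorphism of $\mathcal{B}$ (so $\sigma$ is a homomorphism and the semidirect product is legitimate) and allows the product of two factorizations to be rewritten in normal-factor-times-quotient form, $(\beta\alpha)(\beta'\alpha')=\beta(\alpha\beta'\alpha^{-1})(\alpha\alpha')$, since $\alpha\beta'\alpha^{-1}=\sigma_\alpha(\beta')\in\mathcal{B}$ and $\alpha\alpha'\in\mathcal{A}$. This is the computation displayed immediately before the theorem, and it shows $\psi$ respects the operation $(\alpha,\beta)\cdot(\alpha',\beta')=(\alpha\alpha',\beta\alpha\beta'\alpha^{-1})$; hence $\psi$ is an isomorphism.

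Finally I would obtain the half-automorphism statement from Corollary \ref{c41}, which gives $Half(L_M)\cong C_2\times Aut(L_M)$; substituting the isomorphism just established yields $Half(L_M)\cong C_2\times(\mathcal{A}\stackrel{\sigma}{\ltimes}\mathcal{B})$, completing the proof. I expect the only genuine point requiring care to be verifying that $\psi$ is a homomorphism, i.e.\ that normality of $\mathcal{B}$ converts a naive product of two $\beta\alpha$-factorizations back into a single such factorization; but since Propositions \ref{p43} and \ref{p44} have already isolated both the normality of $\mathcal{B}$ and the disjointness $\mathcal{A}\cap\mathcal{B}=\{1\}$, this obstacle has effectively been dispatched in advance, and the theorem is essentially a consolidation of the preceding propositions and corollaries.
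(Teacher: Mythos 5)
Your proposal is correct and follows essentially the same route as the paper: it combines Proposition \ref{p42} ($\mathcal{A}\cong S_3\times Aut(M)$), Proposition \ref{p44} ($\mathcal{B}\cong C_2^{2s}$, $\mathcal{B}\triangleleft Aut(L_M)$), Proposition \ref{p43} together with \eqref{eq57} (the factorization $Aut(L_M)=\mathcal{B}\mathcal{A}$ with $\mathcal{A}\cap\mathcal{B}=\{1\}$), the map $\psi(\beta\alpha)=(\alpha,\beta)$, and Corollary \ref{c41} for the $C_2$ factor, exactly as in the text preceding the theorem. If anything, you are slightly more careful than the paper in noting that unique factorization (from $\mathcal{A}\cap\mathcal{B}=\{1\}$) is what makes $\psi$ well-defined.
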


We finish this paper with two examples.
\begin{example}
\label{ex1}
Let $M = C_3$, the cyclic group of order $3$. Then $L_M$ is a nonassociative Bol loop of order $12$, which is recognized by the command ``RightBolLoop$(12,3)$'' in the library of  loops of the LOOPS package \cite{NV1} for GAP \cite{gap}. The Cayley table of this loop is presented below.

\begin{center}
\begin{tabular}{C|CCCCCCCCCCCC}
*&1&2&3&4&5&6&7&8&9&10&11&12\\
\hline
1&1&2&3&4&5&6&7&8&9&10&11&12\\
2&2&1&4&3&6&5&8&10&11&9&12&7\\
3&3&5&6&2&4&1&10&9&12&11&7&8\\
4&4&6&5&1&3&2&9&11&7&12&8&10\\
5&5&3&2&6&1&4&12&7&10&8&9&11\\
6&6&4&1&5&2&3&11&12&8&7&10&9\\
7&7&9&11&8&12&10&1&5&4&6&3&2\\
8&8&10&12&7&11&9&2&1&6&5&4&3\\
9&9&7&8&11&10&12&4&3&1&2&5&6\\
10&10&8&7&12&9&11&3&2&5&1&6&4\\
11&11&12&10&9&8&7&6&4&2&3&1&5\\
12&12&11&9&10&7&8&5&6&3&4&2&1\\
\end{tabular}
\end{center}

Since $Aut(M) = C_2$, we have that

\begin{center}
$Aut(L_M) \cong C_2 \times S_3$ and $Half(L_M) \cong C_2^2 \times S_3$
\end{center}
by Theorem \ref{c42}. Then $L_M$ has $24$ half-automorphisms, from which $12$ are proper. By using GAP computing, we can obtain expressions for these mappings in terms of permutations. The automorphisms of $L_M$ are the permutations:
\begin{center}
$\begin{array}{ccc}
I_d,&( 7, 9)( 8,11)(10,12),&( 2, 8,11)( 4,12,10)( 5, 9, 7),\\
( 2, 9,11, 5, 8, 7)( 3, 6)( 4,12,10),&( 2,11)( 4,10)( 5, 7),&( 2,11, 8)( 4,10,12)( 5, 7, 9),\\
( 2, 5)( 3, 6)( 7, 8)( 9,11)(10,12),&( 2, 5)( 3, 6)( 7,11)( 8, 9),&( 2, 7)( 3, 6)( 4,10)( 5,11)( 8, 9),\\
( 2, 7, 8, 5,11, 9)( 3, 6)( 4,10,12),&( 2, 8)( 4,12)( 5, 9),&( 2, 9)( 3, 6)( 4,12)( 5, 8)( 7,11)
\end{array}$
\end{center}
and the proper half-automorphisms of $L_M$ are the permutations:
\begin{center}
$\begin{array}{ccc}
( 2, 5)( 7, 8)( 9,11)(10,12),&( 2, 5)( 7,11)( 8, 9),&( 2, 7)( 4,10)( 5,11)( 8, 9),\\
( 2, 7, 8, 5,11, 9)( 4,10,12),&( 2, 9,11, 5, 8, 7)( 4,12,10),&( 2, 9)( 4,12)( 5, 8)( 7,11),\\
( 3, 6),&( 3, 6)( 7, 9)( 8,11)(10,12),&( 2, 8,11)( 3, 6)( 4,12,10)( 5, 9, 7),\\
( 2, 8)( 3, 6)( 4,12)( 5, 9),&( 2,11)( 3, 6)( 4,10)( 5, 7),&( 2,11, 8)( 3, 6)( 4,10,12)( 5, 7, 9).
\end{array}$
\end{center}

\end{example}

\begin{example}
\label{ex2}
Let $M $ be the group $C_4\times C_2$. This group is recognized by the command ``SmallGroup$(8,2)$'' in GAP and its Cayley table is presented below.

\begin{center}
\begin{tabular}{C|CCCCCCCC}
*&1&2&3&4&5&6&7&8\\
\hline
1&1&2&3&4&5&6&7&8\\
2&2&4&5&6&7&1&8&3\\
3&3&5&1&7&2&8&4&6\\
4&4&6&7&1&8&2&3&5\\
5&5&7&2&8&4&3&6&1\\
6&6&1&8&2&3&4&5&7\\
7&7&8&4&3&6&5&1&2\\
8&8&3&6&5&1&7&2&4\\
\end{tabular}
\end{center}

The Bol loop $L_M$ is nonassociative and has order $32$. Furthermore, since $Aut(M) = D_8$, the dihedral group of order $8$, we have that

\begin{center}
$Aut(L_M) \cong (S_3 \times D_8) \stackrel{\sigma}{\ltimes} C_2^{4}$ and $Half(L_M) \cong C_2 \times ((S_3 \times D_8) \stackrel{\sigma}{\ltimes} C_2^{4})$
\end{center}
by Theorem \ref{t41}. Then $L_M$ has $1536$ half-automorphisms, from which $768$ are proper. Considering $K = \{1,a,b,c\}$ as the Klein group, the permutation 

\begin{center}
$((a,1),(a,4))((a,3),(a,7))((b,1),(b,4))((b,3),(b,7))((c,2),(c,6))((c,5),(c,8))$
\end{center}
is an example of a proper half-automorphism of $L_M$ which does not fix the subgroup $(K,1)$ of $L_M$. This permutation was obtained by using GAP computing with the LOOPS package.
\end{example}

\section*{Acknowledgments}
Some calculations in this work have been made by using the LOOPS package \cite{NV1} for GAP \cite{gap}.


\end{document}